\newtheorem{theorem}{Theorem}[section]
\newtheorem{proposition}[theorem]{Proposition}
\newtheorem{lemma}[theorem]{Lemma}
\newtheorem{corollary}[theorem]{Corollary}
\newtheorem{remark}[theorem]{Remark}
\newtheorem{definition}[theorem]{Definition}
\newtheorem*{theorem*}{Theorem}
\title{Closed Formulas for $\eta$-Corrections in the Once Punctured Torus}
\author{Nelson Abdiel Col\'on Vargas \orcidlink{0009-0009-9038-7328}}
\affil{North Carolina State University \\ \texttt{nacolonv@ncsu.edu}}
\affil{University of Cambridge}
\date{}
\begin{document}

\maketitle

\begin{abstract}
We study $\eta$-correction terms in the Kauffman bracket skein algebra of the once-punctured torus $K_t(\Sigma_{1,1})$. While the Frohman--Gelca product-to-sum rule gives an explicit multiplication formula on the closed torus, the once-punctured torus introduces correction terms in the ideal $(\eta)$. We give a closed formula for the Chebyshev-threaded family generated by the primitive determinant-two pair
\[
P_n=T_n((1,2))\cdot(1,0).
\]
The correction $\epsilon_n$ has an explicit Chebyshev expansion whose coefficients factor as geometric sums in $t^{\pm4}$ and whose terms are governed by a parity pattern arising from the Chebyshev recurrence.

We also treat a primitive maximal-thread regime, in which one Frohman--Gelca summand is fully threaded and the other is simple or doubly covered. In this case the discrepancy is an explicit $\eta$-linear cascade with Chebyshev $S$-coefficients, lowering the thread degree by two at each step. These formulas recover the relevant low-determinant behavior and give compact closed multiplication rules for structured threaded families in $K_t(\Sigma_{1,1})$.
\end{abstract}

\section{Introduction}

The Kauffman bracket skein algebra $K_t(F)$ encodes topological information about framed links in the thickened surface $F\times[0,1]$ by imposing local skein relations and a stacking product. When $F$ is a torus, Frohman and Gelca \cite{frohmangelca} famously showed that multiplication of (threaded) simple closed curves admits a clean product-to-sum rule expressed in terms of the noncommutative torus. Passing to a \emph{punctured} torus, however, introduces a central element $\eta$ represented by a small loop about the puncture; resolving crossings that encounter the puncture generates additional terms lying in the ideal $(\eta)$, and the elegant closed-torus formula fractures into a main Frohman--Gelca part plus an $\eta$-correction. 
Understanding the structure of these correction terms is central to both the internal algebraic theory and to computational applications in topological quantum field theories, quantum character varieties, and quantum simulation architectures.

Recent work has established fundamental structural properties of $K_t(\Sigma_{1,1})$ through connections to other areas of mathematics. Bousseau \cite{bousseau} proved the Thurston positivity conjecture for $K_t(\Sigma_{1,1})$ by connecting the skein algebra to enumerative geometry, realizing the structure constants through counts of curves on complex cubic surfaces using quantum scattering diagrams. Similarly, Queffelec \cite{queffelec} established a general positivity result for the Jones-Wenzl basis on all orientable surfaces (other than the torus) by proving the full functoriality of $\mathfrak{gl}_{2}$ Khovanov homology for webs and foams. These geometric and categorical approaches establish the existence of important structural properties.

Despite substantial progress, explicit closed descriptions of $\eta$-corrections are available only in restricted regimes. For parallel curves ($|\det|=0$), the problem reduces to Chebyshev identities with no correction terms. Cho \cite{cho} established that products of simple curves with $|\det|=\pm2$ produce a single $\eta$. More recently, Wang--Wong \cite{wangwong} gave a fast recursive method for computing the Frohman--Gelca discrepancy on $\Sigma_{1,1}$. The aim of the present paper is to identify structured threaded families for which these correction terms admit closed symbolic formulas, exposing the combinatorial mechanism by which $\eta$-terms are created, propagated, and canceled.

Let $C=(p,q)$ denote a primitive simple closed curve on $\Sigma_{1,1}$ (identified with a homology class in $H_1(\Sigma_{1,1};\mathbb{Z})\cong\mathbb{Z}^2$ with $\gcd(p,q)=1$). Threaded curves arise by Chebyshev evaluation: if $k>1$ and $(kp,kq)$ is a multiple of $C$, then we define $(kp,kq)_T := T_k(C)$. When we multiply a threaded curve $T_n(C_2)$ by a simple curve $C_1$ whose algebraic intersection with $C_2$ has modest absolute value, even for $|\det(C_1,C_2)|=\pm2$, the naive approach of expanding $T_n$ via its recurrence quickly generates a tangle of intermediate products, each possibly introducing puncture loops. Manual calculation becomes uninformative just beyond the smallest values of $n$.

This paper makes progress on this challenge by providing closed formulas for significant families of threaded products. Our main contributions are threefold. First, we provide a closed formula for the canonical infinite family of products
\[
P_n := T_n((1,2)) \ast(1,0),
\]
whose underlying simple curves $(1,2)$ and $(1,0)$ have determinant $-2$. Equivalently, \(P_n=(n,2n)_T\cdot(1,0)\), so the threaded product itself has determinant \(-2n\); the determinant-two condition belongs to the primitive pair that generates the family, not to the threaded lattice labels.

Writing $L_k = \sum_{\ell=-k}^{k} t^{4\ell}$, we prove that for $n\ge 3$,
\begin{equation}\label{eq:intro-Pn-main}
P_n = t^{-2n}(n+1,2n)_T + t^{2n}(n-1,2n)_T + \epsilon_n,
\end{equation}
where the correction term is
\begin{equation}\label{eq:intro-Pn-eps}
\epsilon_n = \eta \sum_{k=0}^{\lfloor (n-1)/2\rfloor} \left( T_{n-1-2k}((1,2)) - \delta_{n-1-2k,0} \right) L_k.
\end{equation}

Second, Cho \cite{cho} showed that primitive pairs of curves with $|\det|=2$ are carried by mapping-class-group diffeomorphisms to the standard determinant-two pair. Consequently, the closed formula for the standard family $P_n$ gives a normal form for the determinant-two threaded case, transported by the induced skein-algebra automorphism. This provides a closed symbolic description of the correction pattern for products of the form $C_1\ast T_k(C_2)$ whenever the underlying primitive pair satisfies $|\det(C_1,C_2)|=2$.

Third, we identify the geometric source of this pattern through what we call the \emph{primitive maximal-thread mechanism}. For primitive curves $(p,q)$ and $(r,s)$ with determinant $n = ps-qr \geq 2$, suppose that one of the Frohman--Gelca summands is threaded with degree $n$ (i.e., either $n = \gcd(p + r, q + s)$ or $n = \gcd(p - r, q - s)$). We prove that the product takes the closed form
\[
(p,q) \ast (r,s) = t^{\varepsilon n} T_{d_1}(C_+) + t^{-\varepsilon n} T_{d_2}(C_-) + \epsilon,
\]
where $C_+ = (p + r, q + s)$, $C_- = (p - r, q - s)$, and $d_1 = \gcd(p + r, q + s)$, $d_2 = \gcd(p - r, q - s)$ with $\min\{d_1, d_2\} \in \{1, 2\}$ and $\max\{d_1, d_2\} = n$, the correction term has the explicit form
\[
\epsilon = \eta \ast \sum_{j = 0}^{\left\lfloor \frac{n-2}{2} \right\rfloor} t^{\varepsilon(n - 2 - 2j)} \ast \left( T_{n - 2 - 2j}(C_*/n) - \delta_{n - 2 - 2j, \, 0} \right) \ast S_j\left(t^2 + t^{-2}\right),
\]
where the signs and curves are determined by which summand carries the maximal threading. 

The work builds on Cho's low-determinant computations \cite{cho}, 
$\eta$-degree bounds from our previous work with Frohman \cite{abdiel2014}, and the recursive framework of Wang--Wong \cite{wangwong}. Its main contribution is to isolate two structured regimes in which the $\eta$-corrections admit closed symbolic formulas, making the correction patterns visible beyond their recursive computation.

The paper is organized as follows. Section~\ref{sec:prelim} reviews skein algebra conventions, threading, and determinant-based correction behavior. Section~\ref{sec:gen-mult} introduces the family $P_n$, records low-$n$ computations, and proves the parity lemmas that control $\eta$ creation. Section~\ref{sec:general-formula} establishes the closed-form expression for $P_n$ and develops equivalent coefficient factorizations. Section~\ref{sec:max-thread} formulates and proves the primitive maximal-thread correction structure.

\section{Preliminaries}
\label{sec:prelim}
\subsection{The Kauffman Bracket Skein Algebra $K_t(F)$}
Given an oriented surface $F$, we consider framed links within the cylindrical space $F \times [0,1]$, which are embeddings of disjoint unions of annuli. Taking formal $\mathbb{C}$-linear combinations of these links and imposing the Kauffman bracket skein relations yields:
\begin{align*}
    \langle \raisebox{-0.35em}{\includegraphics[width=.5cm]{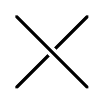}} \rangle &= t \langle \raisebox{-0.2em}{\includegraphics[width=.4cm]{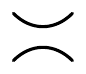}} \rangle + t^{-1} \langle \raisebox{-0.3em}{\includegraphics[width=.4cm]{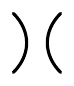}} \rangle \\
    \langle L \cup \bigcirc \rangle &= (-t^2 - t^{-2}) \langle L \rangle
\end{align*}
An algebra structure emerges from the product operation given by ``stacking'' along the $[0,1]$ direction.

\begin{definition}[Simple Closed Curve]
A \textbf{simple closed curve} on the torus is a curve that does not intersect itself. Algebraically, it is represented by a pair of coprime integers $(p,q)$, meaning their greatest common divisor is 1.
\[
\gcd(p,q) = 1
\]
\end{definition}

\begin{definition}[Composite (Threaded) Curve]
A \textbf{composite curve}, also called a threaded curve, is represented by a pair of integers $(p,q)$ that are not coprime, meaning their greatest common divisor is greater than 1, i.e., $\gcd(p,q) = k > 1$. It is not a single simple curve but is defined as the $k$-th Chebyshev polynomial, $T_k$, of the underlying simple curve $(p/k, q/k)$.
\[
(p,q)_T = T_k\left(\left(\frac{p}{k}, \frac{q}{k}\right)_T\right)
\]
\end{definition}

\subsection{The Product-to-Sum Formulas}
\begin{theorem}[Product-to-sum Formula in $K_t(\Sigma_{1,0})_T ${\cite[Theorem 4.1]{frohmangelca}}]
For any integers $p,q,r,s$, one has:
\[
(p,q)_T \ast (r,s)_T = t^{\left|\begin{smallmatrix} p & q \\ r & s \end{smallmatrix}\right|}(p+r,q+s)_T + t^{-\left|\begin{smallmatrix} p & q \\ r & s \end{smallmatrix}\right|}(p-r,q-s)_T.
\]
\end{theorem}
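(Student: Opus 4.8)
The plan is to reduce the identity to a short monomial computation in a quantum torus, which is the source of the clean form of the Frohman--Gelca rule (and is, in essence, their own argument). Let $A_t$ be the $\mathbb{C}[t^{\pm 1}]$-algebra with basis $\{e_{p,q}\}_{(p,q)\in\mathbb{Z}^2}$ and multiplication $e_{p,q}\,e_{r,s} = t^{\,ps-qr}\,e_{p+r,\,q+s}$; this is associative, $e_{0,0}=1$, $e_{-p,-q}=e_{p,q}^{-1}$, and $e_{p,q}\mapsto e_{-p,-q}$ is an algebra involution $\iota$ whose fixed subalgebra $A_t^{\iota}$ has $\mathbb{C}[t^{\pm 1}]$-basis the symmetrized monomials $\sigma(p,q):=e_{p,q}+e_{-p,-q}$ (here $ps-qr$ is the determinant appearing in the theorem, dictated by the single-crossing resolution $t\langle\cdot\rangle+t^{-1}\langle\cdot\rangle$). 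Expanding $\sigma(p,q)\,\sigma(r,s)$ into its four terms and applying the multiplication rule to each, the result regroups as
\[
\sigma(p,q)\,\sigma(r,s) = t^{\,ps-qr}\,\sigma(p+r,q+s) + t^{-(ps-qr)}\,\sigma(p-r,q-s),
\]
which is the asserted product-to-sum rule, now for the symbols $\sigma$. The remaining content is to transport this to $K_t(\Sigma_{1,0})_T$.

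For that I would use the algebra homomorphism $\Phi\colon K_t(\Sigma_{1,0})_T \to A_t^{\iota}$ determined on the three-generator presentation of $K_t(\Sigma_{1,0})_T$ (Bullock--Przytycki, generators $(1,0),(0,1),(1,1)$) by $(1,0)\mapsto\sigma(1,0)$, $(0,1)\mapsto\sigma(0,1)$, $(1,1)\mapsto\sigma(1,1)$; well-definedness is the finite check that these three elements of $A_t$ satisfy the defining relations. I would then prove $\Phi((p,q)_T)=\sigma(p,q)$ for every $(p,q)$. For composite $(p,q)=k(u,v)$ with $(u,v)$ primitive this is immediate, since $(p,q)_T=T_k((u,v))$ and $\Phi$ is a ring map: $\Phi((p,q)_T)=T_k\!\bigl(e_{u,v}+e_{u,v}^{-1}\bigr)=e_{u,v}^{\,k}+e_{u,v}^{\,-k}=e_{ku,kv}+e_{-ku,-kv}=\sigma(p,q)$, using the first-kind Chebyshev identity $T_k(z+z^{-1})=z^k+z^{-k}$ and $e_{u,v}^{\,k}=e_{ku,kv}$. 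For primitive $(p,q)$ I would induct on $|p|+|q|$: the base cases are the generators (plus $(0,0)_T=2\mapsto 2$ and small classes such as $(1,-1)$, handled by $(1,0)(0,1)=t(1,1)+t^{-1}(1,-1)$); in the inductive step one takes Farey/Stern--Brocot parents $(a,b)+(c,d)=(p,q)$ with $|\det|=1$ and strictly smaller norms (the norm of $(a-c,b-d)$ is also strictly smaller away from base cases), applies the elementary $|\det|=1$ instance of the theorem in $K_t$ (one crossing resolved) to rewrite $(p,q)_T$ as a $\mathbb{C}[t^{\pm 1}]$-combination of products and curves already handled, applies $\Phi$, and collapses using the displayed $\sigma$-identity. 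Finally, since $\{(p,q)_T\}$ (one representative per $\pm$-class) is a $\mathbb{C}[t^{\pm 1}]$-basis of $K_t(\Sigma_{1,0})_T$ and $\Phi$ carries it bijectively onto the basis $\{\sigma(p,q)\}$ of $A_t^{\iota}$, the map $\Phi$ is an isomorphism, hence injective; applying $\Phi$ to both sides of the claimed identity and invoking the $\sigma$-computation shows they agree in $K_t(\Sigma_{1,0})_T$.

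I expect the genuine obstacle to lie in this scaffolding rather than in any curve combinatorics: namely (i) that $\Phi$ is well defined, which requires a workable presentation of $K_t(\Sigma_{1,0})_T$ and a careful verification of the skein relations inside $A_t$, and (ii) that $\{(p,q)_T\}$ is a basis, so that $\Phi$ is injective. Granting these, the product-to-sum identity itself is the two-line monomial manipulation above, and the identification $\Phi((p,q)_T)=\sigma(p,q)$ is routine Chebyshev-and-Farey bookkeeping whose only subtlety is tracking quadrants and small base cases. A more self-contained route avoids $A_t$ and runs the same induction directly in $K_t(\Sigma_{1,0})_T$: the parallel case $|\det|=0$ is the Chebyshev product $T_mT_n=T_{m+n}+T_{|m-n|}$; the case where one factor is non-primitive reduces through the Chebyshev recursion with a telescoping cancellation (structurally like the ones in Section~\ref{sec:general-formula}); and the primitive-times-primitive case with $|\det|\ge 2$ is the one demanding real work — precisely where passing to the quantum torus earns its keep.
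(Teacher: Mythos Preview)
The paper does not give its own proof of this statement: it is quoted in the Preliminaries as \cite[Theorem~4.1]{frohmangelca} and used as background, with no argument supplied. So there is nothing in the present paper to compare your proposal against.

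That said, what you have sketched is essentially the original Frohman--Gelca argument: realize $K_t(\Sigma_{1,0})$ inside the symmetric part of the noncommutative torus via $(p,q)_T\mapsto e_{p,q}+e_{-p,-q}$, observe that the product-to-sum identity is a two-line monomial computation there, and then check that this assignment is an algebra isomorphism. Your identification of the real content --- well-definedness of $\Phi$ and the basis statement on the skein side --- is accurate; Frohman--Gelca handle these by working directly with multicurve bases rather than through a Bullock--Przytycki presentation (which, incidentally, is more naturally stated for $\Sigma_{1,1}$ than for the closed torus), but the substance is the same. Your alternative ``self-contained'' inductive route inside $K_t(\Sigma_{1,0})$ is also viable, and you are right that the primitive--primitive $|\det|\ge 2$ step is exactly where one is forced back to the quantum torus or to an equivalent amount of work.
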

In previous work with Frohman \cite{abdiel2014}, we established the following product-to-sum formula and a related bound on the error term for $K_t(\Sigma_{1,1})$.

\begin{theorem}[Product-to-Sum Formula in $K_t(\Sigma_{1,1})$ {\cite[Theorem 6.1]{abdiel2014}}]
Let $(p,q)_T, (r,s)_T \in K_N(\Sigma_{1,1})$. Then
\[
(p,q)_T \ast (r,s)_T
= t^{\begin{vmatrix} p & q \\ r & s \end{vmatrix}} (p + r, q + s)_T
+ t^{- \begin{vmatrix} p & q \\ r & s \end{vmatrix}} (p - r, q - s)_T
+ \epsilon,
\]
where $\epsilon \in (\eta)$ and the weight of $\epsilon$ is less than or equal to $|p| + |q| + |r| + |s| - 4$.
\end{theorem}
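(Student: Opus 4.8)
The statement has two halves — that $\epsilon$ lies in $(\eta)$, and that its weight is bounded — and I would prove them by quite different means.

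For $\epsilon\in(\eta)$ I would use the capping homomorphism. Filling in the puncture gives an inclusion $\Sigma_{1,1}\hookrightarrow\Sigma_{1,0}$, hence a surjective algebra map $\kappa\colon K_t(\Sigma_{1,1})\to K_t(\Sigma_{1,0})$ that fixes every threaded curve $(a,b)_T$ and sends $\eta$ to $0$ (the small puncture loop becomes trivial, with $\eta$ normalized to vanish under $\kappa$); inspecting the multicurve basis $\{\eta^{j}(a,b)_T\}$, on which $\kappa$ annihilates exactly the $j\ge 1$ terms, gives $\ker\kappa=(\eta)$. Applying $\kappa$ to the claimed identity collapses its left side to the closed-torus product $(p,q)_T\cdot(r,s)_T$, which by the Frohman--Gelca formula equals $t^{\det}(p+r,q+s)_T+t^{-\det}(p-r,q-s)_T$ — precisely the image of the two main terms on the right. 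Hence $\kappa(\epsilon)=0$, i.e.\ $\epsilon\in(\eta)$.

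For the weight bound I would argue skein-theoretically. First reduce to primitive $(p,q),(r,s)$ with $\gcd(p,q)=\gcd(r,s)=1$: the recurrence $T_k(C)=C\ast T_{k-1}(C)-T_{k-2}(C)$ holds verbatim in $K_t(\Sigma_{1,1})$, since parallel curves commute and carry no correction, so expanding both factors along it writes the product from primitive products, main terms, and — as $(\eta)$ is an ideal — further elements of $(\eta)$, with weights adding at each step. For primitive curves set $n=|\det|$, model $\Sigma_{1,1}$ as $\mathbb{R}^2/\mathbb{Z}^2$ punctured at a generic point, and represent $(p,q)$ and $(r,s)$ by straight geodesics through generic points: of distinct slope they bound no bigon, so they meet transversally in exactly $n$ points away from the puncture and lie in minimal position. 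Stacking and resolving all $n$ crossings expands the product over $2^n$ states, each a disjoint union of loops assembled from sub-arcs of the two geodesics; in the taxicab metric the loops of one state have total length $(|p|+|q|)+(|r|+|s|)$, a loop in class $(a,b)$ has length at least $|a|+|b|$, and the mutually disjoint essential loops of a state are parallel, so every $(a,b)_T$ that can occur satisfies $|a|+|b|\le|p|+|q|+|r|+|s|$.

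To sharpen this to $|p|+|q|+|r|+|s|-4$ I would separate the states: one whose loops are all essential (after deleting null-homotopic circles missing the puncture) feeds only the two Frohman--Gelca summands and spends its whole length budget there, whereas a state contributing a genuinely new term to $\epsilon$ must carry a loop that surrounds the puncture, and that loop cannot be short — it must separate the generic puncture from the rest of the pattern — so it borrows a fixed amount of length from the geodesics, forcing the essential part down to weight at most $|p|+|q|+|r|+|s|-4$; the Chebyshev bootstrap then transports the bound to all threaded products. The crux — and the step that resists a short argument — is precisely this last estimate: the bare sub-arc count gives only $|p|+|q|+|r|+|s|$, and extracting the extra $4$ means pinning down in a suitable normal form how much length a puncture-surrounding loop must consume, together with checking that the states without such a loop recombine into \emph{exactly} the two main terms rather than those terms plus an $\eta$-free remainder, and that this bookkeeping survives the Chebyshev recurrence.
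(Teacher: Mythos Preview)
This theorem is not proved in the paper under review; it appears in Section~\ref{sec:prelim} as a preliminary quoted from \cite[Theorem~6.1]{abdiel2014}, with no argument given. There is therefore no proof here against which to compare your attempt.

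On the sketch itself, two remarks. For $\epsilon\in(\eta)$, the capping argument is the natural one, but the normalization you flag in passing is doing essential work: if $\eta$ denotes the bare peripheral loop, then $\ker\kappa$ is generated by $\eta+t^{2}+t^{-2}$, which is \emph{incomparable} with $(\eta)$, and capping alone yields only $\epsilon\in\ker\kappa$. The state-sum observation in your second half---that a non-main resolution state must carry an explicit puncture-encircling loop---is what actually forces a genuine $\eta$-factor, so the two halves are less independent than your write-up suggests. For the weight bound, beyond the ``extra $-4$'' you already flag as unfinished, the Chebyshev bootstrap is not free: peeling $T_k(C)=C\cdot T_{k-1}(C)-T_{k-2}(C)$ and distributing produces products such as $C\cdot\big((k{-}1)C\pm lD\big)_T$ whose total input weight equals the original $k\,|C|+l\,|D|$, so a straight induction on weight stalls. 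You need a finer inductive scheme (or a direct argument covering the primitive-times-threaded case) before ``weights adding at each step'' is justified.
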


\begin{lemma}[{\cite[Lemma 6.2]{abdiel2014}}] \label{lem:bound}
For $\epsilon$ as in the previous theorem, the highest power of $\eta$ appearing in $\epsilon$ is less than or equal to
\[
\left\lfloor \frac{\min\left\{ |p| + |r|,\, |q| + |s| \right\}}{2} \right\rfloor.
\]
\end{lemma}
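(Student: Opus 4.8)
The plan is to prove the bound by resolving the stacked product diagram with the Kauffman bracket relation and counting, state by state, how many boundary-parallel loops — each carrying a factor of $\eta$ — can survive. Model $\Sigma_{1,1}$ as $(\mathbb{R}^2/\mathbb{Z}^2)\setminus D$ with $D$ a small open disk about a point of the torus, so $\eta$ is the class of $\partial D$; recall that in the multicurve basis of $K_t(\Sigma_{1,1})$ the ideal $(\eta)$ is spanned by multicurves with at least one $\partial D$-parallel component (stacking a copy of $\partial D$ onto a multicurve introduces no crossings), $j$ parallel copies of $\partial D$ represent $\eta^j$, and hence the top power of $\eta$ in an element is the largest number of $\partial D$-parallel components among the basis multicurves occurring in it. After expanding each threaded factor $T_d(C)$ into a $\mathbb{C}[t^{\pm1}]$-combination of multicurves built from $k\le d$ parallel pushed-off copies of the simple curve $C$, and stacking, the product $(p,q)_T\cdot(r,s)_T$ becomes a $\mathbb{C}[t^{\pm1}]$-combination of diagrams, each made of straight-line representatives missing $D$. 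Fix two essential \emph{axis arcs}, generic representatives $\gamma_v\simeq\{x=\mathrm{const}\}\setminus D$ and $\gamma_h\simeq\{y=\mathrm{const}\}\setminus D$, each an arc with both endpoints on $\partial D$, chosen transverse to all these diagrams and disjoint from all their crossing disks. Since $j$ parallel copies of a curve meet such an axis arc exactly $j$ times as often as the curve does, each diagram in the expansion meets $\gamma_v$ in at most $|p|+|r|$ points and $\gamma_h$ in at most $|q|+|s|$ points (with equality for the top parallel-copy term, strictly less for lower Chebyshev terms).

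Next, resolve all crossings with the Kauffman relation and delete contractible loops (scalar factors $-t^2-t^{-2}$, carrying no $\eta$); this expresses the product in the multicurve basis as $\sum_\sigma c_\sigma L_\sigma$ with $c_\sigma\in\mathbb{C}[t^{\pm1}]$ and each $L_\sigma$ a multicurve. Because resolutions and deletions occur away from $\gamma_v,\gamma_h$ and can only decrease intersection with them, $\#(L_\sigma\cap\gamma_v)\le|p|+|r|$ and $\#(L_\sigma\cap\gamma_h)\le|q|+|s|$ for every state $\sigma$. Writing $m(\sigma)$ for the number of $\partial D$-parallel components of $L_\sigma$, the first paragraph shows the top power of $\eta$ in the product — equivalently in $\epsilon$, since the two Frohman--Gelca terms are exactly the $m(\sigma)=0$ contributions — is at most $\max_\sigma m(\sigma)$, so it remains to bound each $m(\sigma)$.

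The geometric heart is the inequality $\#(L_\sigma\cap\gamma_v)\ge 2\,m(\sigma)$ (and likewise for $\gamma_h$): combined with the previous paragraph it gives $m(\sigma)\le\tfrac12\min\{|p|+|r|,|q|+|s|\}$, hence $m(\sigma)\le\left\lfloor\frac{\min\{|p|+|r|,\,|q|+|s|\}}{2}\right\rfloor$, which is the assertion. To see it, let $\delta$ be a $\partial D$-parallel component of $L_\sigma$; then $\delta$ separates $\Sigma_{1,1}$, and the component $R$ of $\Sigma_{1,1}\setminus\delta$ containing $\partial D$ is an annulus. If $\gamma_v$ were contained in $R$ it would be an arc in an annulus with both endpoints on a single boundary circle, hence boundary-parallel, forcing $\gamma_v$ to be inessential in $\Sigma_{1,1}$ — impossible, since $\gamma_v\simeq(0,1)$. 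So $\gamma_v$ leaves $R$ and thus crosses $\delta$; as its two endpoints lie on the same side of $\delta$, it crosses $\delta$ an even, hence at least two, number of times. The $m(\sigma)$ such components $\delta$ are pairwise disjoint, so these crossings are distinct points, and the inequality follows; the same argument applies to $\gamma_h$.

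I expect this last step to be the main obstacle, since it is where the puncture genuinely intervenes: the class $\partial D$ is invisible to the homology of the closed torus, and only a properly embedded arc such as $\gamma_v$ detects it. The remaining work is careful bookkeeping — one must check that the two axis arcs can be chosen simultaneously generic with respect to all the multicurves in play and to every crossing and resolution disk, and that the parallel-copies expansion of a threaded factor never raises axis-arc intersection numbers above $|p|+|r|$ and $|q|+|s|$, so that every term of the state sum respects these bounds. The same resolution picture, with $t$-weights tracked in place of boundary-parallel loop counts, yields the companion weight estimate in the preceding theorem.
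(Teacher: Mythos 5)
Your argument is correct: the state-sum count, with each $\partial D$-parallel component forced to meet an essential arc based at the puncture in at least two points, does yield the bound $m(\sigma)\le\lfloor\min\{|p|+|r|,|q|+|s|\}/2\rfloor$. The present paper does not reprove this lemma but quotes it from Abdiel--Frohman \cite{abdiel2014}, and their proof is essentially the same diagrammatic counting (intersections of resolution states with the puncture-based arcs/edges of a fundamental domain), so your proposal matches the source's approach rather than offering a genuinely different route.
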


\subsection{Correction Term Structure by Determinant Value}

The structure of the correction term $\epsilon$ depends critically on the determinant value:

\begin{itemize}
    \item \textbf{Determinant 0:} Products of parallel curves are resolved using Chebyshev polynomial identities, e.g., $T_n(C)T_m(C) = T_{n+m}(C) + T_{|n-m|}(C)$. Here, $\epsilon = 0$.
    
    \item \textbf{Determinant $\pm 1$: \cite{wangwong}} When $\det\begin{pmatrix}p & r \\ q & s\end{pmatrix} = \pm 1$, Wang--Wong show that the discrepancy term $\epsilon$ vanishes. In this case, the product reduces to the standard Frohman--Gelca two-term formula with no $\eta$-correction. Hence, $\epsilon = 0$.

    \item \textbf{Determinant $\pm$2: \cite{cho}} Resolved using results from Cho's thesis. The correction term is:
    \begin{equation}
    \epsilon = 
    \begin{cases}
    \eta & \text{if both } (p,q)_T \text{ and } (r,s)_T \text{ are simple curves} \\
    0 & \text{if at least one curve is not simple (composite)}
    \end{cases}
    \end{equation}
\end{itemize}
\textbf{The $\eta$-Bound Rule:} A direct consequence of the lemma~\ref{lem:bound} on the highest power of $\eta$ in $\epsilon$ provides a condition for when $\epsilon$ must be zero:
\begin{equation}
\text{If } \min(|p|+|r|, |q|+|s|) < 2 \text{, then } \epsilon = 0.
\end{equation}
\textbf{The Decomposition Method:} Products involving a composite (threaded) skein, such as $T_k(C)$, can be calculated by first expressing the skein as a polynomial of a simpler curve (e.g., $(2,0)_T = (1,0)_T^2 - 2$) and then applying the product rules to the sequence of simpler multiplications.

\section{General Multiplication Framework}
\label{sec:gen-mult}

\subsection{A Chebyshev-Threaded Family Generated by a Primitive Determinant-Two Pair}

We now focus on the family
\[
P_n=T_n((1,2))\cdot(1,0)=(n,2n)_T\cdot(1,0).
\]
Although the primitive pair \((1,2),(1,0)\) has determinant \(-2\), the threaded lattice labels \((n,2n)\) and \((1,0)\) have determinant \(-2n\). Thus the family is infinite in determinant while retaining a fixed primitive determinant-two seed. First, its determinant structure is simple enough that it can be solved algorithmically using the established rules. Second, it serves as a canonical example that can be generalized via diffeomorphism to a much larger class of products.

The skeins $(n, 2n)_T = T_n((1,2)_T)$ satisfy the Chebyshev recurrence relation. A careful derivation shows that this induces a recurrence for the products themselves:
\begin{equation*}
P_n = (1,2)_T \ast P_{n-1} - P_{n-2}
\end{equation*}
This recurrence relation is the key to finding a closed-form solution for the correction term $\epsilon_n$. We begin by establishing the base cases through direct computation.

\subsubsection*{Base Case Calculations}
The following results were derived using the decomposition method and the rules for small determinants. They form the foundation for our inductive proof.

\begin{itemize}
    \item \textbf{n=1:} $P_1 = t^{-2}(2,2)_T + t^{2}(0,2)_T + \eta$
    \item \textbf{n=2:} $P_2 = t^{-4}(3,4)_T + t^{4}(1,4)_T + (1,2)_T\eta$
    \item \textbf{n=3:} $P_3 = t^{-6}(4,6)_T + t^{6}(2,6)_T + (t^4 + t^{-4} + 1 + (2,4)_T)\eta$
    \item \textbf{n=4:} $P_4 = t^{-8}(5,8)_T + t^{8}(3,8)_T + [ (t^4 + t^{-4} + 1)(1,2)_T + (3,6)_T ]\eta$
\end{itemize}
See Appendix~\ref{appendix} for calculations.

\subsection{Parity and Determinant Properties for $P_n$}
\begin{lemma}[Constant Determinant Property]\label{lem:property}
For products of the form $(1,2)_T \ast (n \pm 1, 2n)_T$, the determinant is always $\pm 2$:
\begin{equation}
\det(((1,2)), (n+1, 2n)) = (1)(2n) - (2)(n+1) = -2
\end{equation}
\end{lemma}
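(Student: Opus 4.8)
The plan is to verify the claim by a direct $2\times 2$ determinant computation, since the statement is purely a linear-algebraic fact about the intersection form on $H_1(\Sigma_{1,1};\mathbb{Z})\cong\mathbb{Z}^2$ and carries no skein-theoretic content. First I would record the two matrices in question, $\begin{pmatrix} 1 & 2 \\ n+1 & 2n \end{pmatrix}$ and $\begin{pmatrix} 1 & 2 \\ n-1 & 2n \end{pmatrix}$, attached to the pairs $((1,2),(n+1,2n))$ and $((1,2),(n-1,2n))$ respectively.

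Then I would expand each determinant: the first gives $(1)(2n)-(2)(n+1)=2n-2n-2=-2$, and the second gives $(1)(2n)-(2)(n-1)=2n-2n+2=2$. The key observation is that the $n$-dependent terms cancel, so the value is a nonzero constant independent of $n$ — this is precisely the ``constant determinant'' assertion — with sign $-$ in the ``$+$'' case and $+$ in the ``$-$'' case, so that $|\det|=2$ uniformly.

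Finally I would record the consequence that is actually used later in Section~\ref{sec:gen-mult}: because $|\det|=2$ for every $n$, each intermediate product $(1,2)_T\cdot(n\pm1,2n)_T$ appearing when one iterates the recurrence for $P_n$ lies in the determinant-$\pm2$ regime recalled in Section~\ref{sec:prelim}, hence contributes at most a single factor of $\eta$. There is no real obstacle here; the only thing requiring care is a consistent orientation convention for the $2\times2$ determinant, so that the exponents $t^{\pm2}$ produced by the associated product-to-sum expansions are assigned the correct signs throughout the inductive argument.
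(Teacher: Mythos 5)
Your proposal is correct and follows the same route as the paper: a direct expansion of the $2\times 2$ determinant, $(1)(2n)-(2)(n\pm 1)=\mp 2$, with the $n$-dependent terms cancelling so that $|\det|=2$ uniformly. The paper records only this one-line computation (stated inline in the lemma), so your additional remarks on the $n-1$ case and on how the constant $|\det|=2$ feeds the $\eta$-generation analysis are consistent elaborations rather than a different argument.
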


\begin{lemma}[Parity-Dependent Simplicity]\label{lem:parity}
The generation of new $\eta$ terms in the recurrence depends on the parity of $n$:
\begin{itemize}
    \item If $n$ is even, then $\gcd(n+1, 2n) = 1$, so $(n+1, 2n)_T$ is simple and $(1,2)_T \ast (n+1, 2n)_T$ generates a new $\eta$ term.
    \item If $n$ is odd, then $\gcd(n+1, 2n) = 2$, so $(n+1, 2n)_T$ is composite and no new $\eta$ term is generated.
\end{itemize}
\end{lemma}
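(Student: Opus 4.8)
The plan is to split the statement into its two logically independent parts: a one-line number-theoretic computation of $\gcd(n+1,2n)$, and an appeal to the determinant-$\pm 2$ entry of the correction-term classification recorded in Section~\ref{sec:prelim}.

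First I would run the Euclidean algorithm. Since $2n = 2(n+1)-2$, we get $\gcd(n+1,2n)=\gcd(n+1,2)$, which is $1$ when $n+1$ is odd (i.e. $n$ even) and $2$ when $n+1$ is even (i.e. $n$ odd). Feeding this through the definitions of Section~\ref{sec:prelim}: in the even case $(n+1,2n)$ is a coprime pair, so $(n+1,2n)_T$ is a simple closed curve; in the odd case $\gcd(n+1,2n)=2>1$, so $(n+1,2n)_T$ is composite, namely $T_2\big((\tfrac{n+1}{2},n)_T\big)$.

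Next I would read off the product $(1,2)_T\cdot(n+1,2n)_T$. By Lemma~\ref{lem:property} this product has determinant $-2$, so it falls under the determinant-$\pm 2$ case of the correction-term classification of Cho \cite{cho}: the correction equals $\eta$ when both factors are simple and vanishes when at least one factor is composite. The curve $(1,2)_T$ is always simple since $\gcd(1,2)=1$, so the outcome is controlled entirely by the simplicity of $(n+1,2n)_T$ from the previous step --- a single fresh $\eta$ when $n$ is even, and no new $\eta$ when $n$ is odd --- which is exactly the dichotomy in the two bullet points.

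There is no substantive obstacle here; only two bookkeeping points deserve care. One must check that ``composite'' in the sense $\gcd = 2$ is precisely the hypothesis under which Cho's determinant-$2$ rule forces the correction to vanish (it is, since a threaded factor is by definition a non-simple one), and one should note that the $\eta$-bound rule $\min(|p|+|r|,|q|+|s|)<2\Rightarrow\epsilon=0$ does not interfere: here $\min(n+2,\,2n+2)=n+2\ge 2$ for all $n\ge 0$, so it is the parity, not the bound, that is decisive. It is also worth recording where the lemma is applied: in the recurrence $P_{n+1}=(1,2)_T\cdot P_n-P_{n-1}$, the factor $(1,2)_T$ meets the leading Frohman--Gelca summand $(n+1,2n)_T$ of $P_n$, so the lemma pinpoints exactly at which parity step a new scalar $\eta$ is created as one passes from $\epsilon_n$ to $\epsilon_{n+1}$.
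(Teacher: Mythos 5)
Your proposal is correct and follows essentially the same route the paper takes: the Euclidean reduction $\gcd(n+1,2n)=\gcd(n+1,2)$ together with Cho's determinant-$\pm 2$ classification (correction $=\eta$ iff both factors are simple), which is exactly the reasoning the paper invokes, via Lemma~\ref{lem:property}, in the ``new $\eta$ terms'' step of the proof of Proposition~\ref{thm:main}. Your added checks (that $(1,2)_T$ is always simple and that the $\eta$-bound rule does not interfere) are harmless bookkeeping consistent with the paper.
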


\section{The General Formula for $P_n$}
\label{sec:general-formula}

\begin{remark}[Chebyshev normalization]
Throughout, we use the Chebyshev normalization $T_0=2$ and $T_1(C)=C$. Thus expressions of the form
\[
T_m(C)-\delta_{m,0}
\]
are equal to $T_m(C)$ for $m>0$ and to the unit $1$ for $m=0$. This convention is responsible for the terminal scalar contribution in the $\eta$-correction.
\end{remark}

\begin{proposition}[General Formula for $P_n$]\label{thm:main}
For every $n\geq 1$, the product $P_n = (n, 2n)_T \ast (1,0)_T$ is given by:
\begin{equation}
P_n = t^{-2n}(n+1, 2n)_T + t^{2n}(n-1, 2n)_T + \epsilon_n
\end{equation}
where the correction term is:
\begin{equation}
\epsilon_n = \eta \left[ \sum_{k=0}^{\lfloor\frac{n-1}{2}\rfloor} \left( T_{n-1-2k}((1,2)) - \delta_{n-1-2k, 0} \right) 
\left( \sum_{l=-k}^{k} t^{4l} \right) 
\right]
\end{equation}
\end{proposition}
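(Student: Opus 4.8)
The plan is to prove the formula by strong induction on $n$ using the recurrence $P_n = (1,2)_T \cdot P_{n-1} - P_{n-2}$, with the $n=3$ and $n=4$ base cases supplied above. Write $L_k = \sum_{l=-k}^{k} t^{4l}$ and let $\mathcal{E}_n = \sum_{k=0}^{\lfloor(n-1)/2\rfloor}(T_{n-1-2k}((1,2)) - \delta_{n-1-2k,0})L_k$, so the claim is $\epsilon_n = \eta\,\mathcal{E}_n$. The inductive step splits into two pieces: (i) the main (Frohman–Gelca) terms $t^{-2n}(n+1,2n)_T + t^{2n}(n-1,2n)_T$ must reproduce themselves under $(1,2)_T\cdot(\,\cdot\,) - (\text{previous})$, and (ii) the correction terms must satisfy $\eta\,\mathcal{E}_n = (1,2)_T\cdot\eta\,\mathcal{E}_{n-1} - \eta\,\mathcal{E}_{n-2} + (\text{new }\eta\text{ generated when } (1,2)_T \text{ hits the main terms of } P_{n-1})$.

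For (i), I would apply the closed-torus product-to-sum formula (which governs the leading behavior since the $\eta$-ideal is tracked separately) to compute $(1,2)_T \cdot t^{-2(n-1)}(n,2n-2)_T$ and $(1,2)_T \cdot t^{2(n-1)}(n-2,2n-2)_T$; each splits into two threaded curves with weights shifted by $\pm\det$, and after subtracting the $P_{n-2}$ main terms the cross terms telescope, leaving exactly $t^{-2n}(n+1,2n)_T + t^{2n}(n-1,2n)_T$. This is a bookkeeping computation with determinants all equal to $\pm 2$ by Lemma~\ref{lem:property}. The one subtlety is that the products $(1,2)_T\cdot(n,2n-2)_T$ etc.\ themselves carry $\eta$-corrections governed by Cho's $|\det|=2$ rule and Lemma~\ref{lem:parity}; those corrections are precisely the ``new $\eta$'' source feeding into part (ii), so the two parts must be handled together rather than in isolation.

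For (ii), the heart of the matter is the Chebyshev identity $(1,2)_T \cdot T_{m}((1,2)) = T_{m+1}((1,2)) + T_{m-1}((1,2))$ applied inside $\mathcal{E}_{n-1}$, combined with the $L_k$ recursion $L_k = L_{k-1} + t^{4k} + t^{-4k}$. Multiplying $\mathcal{E}_{n-1}$ by $(1,2)_T$ raises each thread index $n-2-2k$ to $n-1-2k$ and lowers it to $n-3-2k$; the raised family matches most of $\mathcal{E}_n$, the lowered family largely cancels against $\mathcal{E}_{n-2}$, and the leftover boundary terms at $k=0$ and at the top of the range must be reconciled with the newly created $\eta$ from step (i) — which by Lemma~\ref{lem:parity} appears only when $n-1$ is even, i.e.\ exactly when the $\delta$-correction and the extremal $L_k$ coefficient need adjusting. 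The parity-splitting here is unavoidable: the upper limit $\lfloor(n-1)/2\rfloor$ jumps by one every other step, and the $T_0 = 2$ versus $\delta$-subtraction normalization behaves differently in the two parity classes, so I expect to carry out the induction in two interleaved cases ($n$ even from $n-1,n-2$ odd/even, and vice versa).

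The main obstacle I anticipate is bookkeeping the coefficient of $\eta$ at the \emph{boundary} of the sum — reconciling the geometric-sum coefficients $L_k$ with the single $\eta$ that Cho's rule injects at each odd-to-even transition, and verifying that the $\delta_{n-1-2k,0}$ subtraction (which encodes $T_0 = 2$, not $1$, in the skein normalization) propagates correctly through the recurrence without generating spurious constants. A clean way to manage this is to verify the identity at the level of formal generating functions in a bookkeeping variable, or equivalently to prove a companion lemma stating $\mathcal{E}_n - (1,2)_T\mathcal{E}_{n-1} + \mathcal{E}_{n-2} = (\text{explicit }\eta\text{-free creation term})$ matching the output of step (i); once that companion identity is isolated, it reduces to the two scalar recursions for $L_k$ and the Chebyshev three-term relation, both routine.
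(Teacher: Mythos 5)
Your proposal follows essentially the same route as the paper's proof: induction via the recurrence $P_n = (1,2)_T\cdot P_{n-1} - P_{n-2}$, with the main terms handled by the $|\det|=2$ product rule, the new $\eta$ creation governed by the parity lemma (appearing exactly when $n-1$ is even), and the correction-term recurrence $\epsilon_n = (1,2)_T\epsilon_{n-1} - \epsilon_{n-2} + (\text{creation term})$ resolved through the Chebyshev three-term identity, the relation $L_k = L_{k-1} + t^{4k} + t^{-4k}$, and careful treatment of the $T_0=2$ versus $\delta$-subtraction normalization. The companion identity you isolate at the end is precisely the identity the paper verifies in its two parity cases, so the plan is sound and matches the published argument.
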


We first verify the cases $n=1$ and $n=2$ directly. The case $n=3$ is included to illustrate the first nontrivial cancellation pattern, and the induction then proves the formula for all $n\geq 1$.

\begin{proof}
The skeins $(n,2n)_T$ satisfy the Chebyshev recurrence relation $T_n(C) = C \ast T_{n-1}(C) - T_{n-2}(C)$ where $C = (1,2)_T$. This induces a recurrence for the products:
\begin{equation}
P_n = (1,2)_T \ast P_{n-1} - P_{n-2}
\end{equation}
\textbf{Cases for $k \in \{1,2\}$}
\begin{align}
\textbf{For } k=1: \quad P_1 &= (1,2)_T \ast (1,0)_T \\
&= t^{-2}(2,2)_T + t^2(0,2)_T + \eta
\end{align}

\begin{align}
\textbf{For } k=2: \quad P_2 &= (2,4)_T \ast (1,0)_T \\
& = ((1,2)_T^2 - 2) \ast(1,0)_T \\
& = (1,2)\ast[(1,2)_T\ast(1,0)_T] - 2(1,0)_T \\
& = (1,2)\ast[t^{-2}(2,2)_T + t^2(0,2)_T + \eta] - 2(1,0)_T \\
&= t^{-4}(3,4)_T + t^4(1,4)_T + (1,2)_T\eta
\end{align}
\textbf{Base Case}:
\noindent We verify the formula for $k=3$:

\begin{align}
\textbf{For } k=3: \quad P_3 &= (3,6)_T \ast (1,0)_T \\
&= t^{-6}(4,6)_T + t^6(2,6)_T + (t^4 + t^{-4} + 1 + (2,4)_T)\eta
\end{align}
The formula gives:
\begin{align}
\epsilon_3 &= \eta \left[T_2((1,2)) + (T_0((1,2))-1)(t^4 + 1 + t^{-4})\right] \\
&= \eta \left[(2,4)_T + (t^4 + 1 + t^{-4})\right]
\end{align}
which matches. See Appendix~\ref{appendix} for detailed calculations of $n \in \{3,4,5\}$.

\noindent\textbf{Inductive step.}
Assume the formula holds for all $k\le n$. We prove the formula for $P_{n+1}$ using
\[
P_{n+1}=(1,2)_T\cdot P_n-P_{n-1}.
\]


The main product-to-sum terms for $P_{n+1}$ are generated from the multiplication of $(1,2)_T$ with the main terms of $P_n$. As established by Lemma~\ref{lem:parity} and Lemma~\ref{lem:property}, the determinants of these products are always $\pm 2$. The main terms of $(1,2)_T \ast P_n$ correctly produce the main terms of $P_{n+1}$ plus the main terms of $P_{n-1}$. When we subtract $P_{n-1}$, these latter terms cancel perfectly, leaving the correct main terms for $P_{n+1}$. The core of the proof is to show that the correction terms also obey the recurrence. The recurrence for the epsilon term is:
\begin{equation*}
\epsilon_{n+1} = (\text{new } \eta \text{ terms}) + (1,2)_T \epsilon_n - \epsilon_{n-1}
\end{equation*}
The ``new $\eta$ terms'' are generated from the product $(1,2)_T \ast P_n$. Specifically, from the terms $t^{-2n}(1,2)_T(n+1,2n)_T$ and $t^{2n}(1,2)_T(n-1,2n)_T$. The determinants of these inner products are always $\pm 2$. A new $\eta$ is generated if and only if the curves are simple.
\begin{itemize}
    \item If $n$ is \textbf{even}, then $n\pm 1$ is odd. This means $\gcd(n\pm 1, 2n) = 1$, so the curves $(n\pm 1, 2n)_T$ are simple. Both products generate an $\eta$. The total contribution is $t^{-2n}(\eta) + t^{2n}(\eta) = (t^{2n}+t^{-2n})\eta$.
    \item If $n$ is \textbf{odd}, then $n\pm 1$ is even. This means $\gcd(n\pm 1, 2n) = 2$, so the curves $(n\pm 1, 2n)_T$ are composite. Neither product generates an $\eta$. The contribution is 0.
\end{itemize}
\paragraph{Case 1: $n$ is odd.}
In this case, no new $\eta$ terms are generated, so the recurrence is
\[
\epsilon_{n+1} = (1,2)_T \ast \epsilon_n - \epsilon_{n-1}.
\]
To prove this, we expand $(1,2)_T \ast \epsilon_n$ using the Kronecker delta formula for $\epsilon_n$ and the Chebyshev identity
\[
T_1 \ast T_m = T_{m+1} + T_{m-1}.
\]
Let
\[
L_k = \sum_{\ell = -k}^k (t^4)^\ell.
\]
Then:
\begin{align*}
(1,2)_T \ast \epsilon_n 
&= (1,2)_T \ast \eta \left[ \sum_{k=0}^{\left\lfloor\frac{n-1}{2}\right\rfloor} \left( T_{n-1 - 2k}(1,2)_T - \delta_{n-1 - 2k, 0} \right) L_k \right] \\
&= \eta \left[ \sum_{k=0}^{\left\lfloor\frac{n-1}{2}\right\rfloor} \left( (1,2)_T \ast T_{n-1 - 2k}(1,2)_T - (1,2)_T \ast \delta_{n-1 - 2k, 0} \right) L_k \right] \\
&= \eta \left[ \sum_{k=0}^{\left\lfloor\frac{n-1}{2}\right\rfloor} \left( T_{n - 2k}(1,2)_T + T_{n - 2 - 2k}(1,2)_T - (1,2)_T \ast \delta_{n-1 - 2k, 0} \right) L_k \right].
\end{align*}

This expression splits into two summations, which we denote by $\text{Sum A}$ and $\text{Sum B}$, plus a correction term from the Kronecker delta. Since $n$ is odd, the delta is non-zero only for the final term of the sum where $n - 1 - 2k = 0$, i.e., $k = \frac{n-1}{2}$.

\begin{align*}
\text{Sum A} &= \eta \left[ \sum_{k=0}^{\left\lfloor\frac{n-1}{2}\right\rfloor} T_{n - 2k}(1,2)_T \ast L_k \right], \\
\text{Sum B} &= \eta \left[ \sum_{k=0}^{\left\lfloor\frac{n-1}{2}\right\rfloor} T_{n - 2 - 2k}(1,2)_T \ast L_k \right].
\end{align*}

$\text{Sum A}$ corresponds exactly to the definition of $\epsilon_{n+1}$ (note that $n+1$ is even, so its delta term vanishes). $\text{Sum B}$ cancels with the Kronecker delta correction and the expression for $\epsilon_{n-1}$.

The result is the identity:
\[
(1,2)_T \ast \epsilon_n = \epsilon_{n+1} + \epsilon_{n-1},
\]
which rearranges to give the desired recurrence relation:
\[
\epsilon_{n+1} = (1,2)_T \ast \epsilon_n - \epsilon_{n-1}.
\]

\paragraph{Case 2: $n$ is even.}
In this case, a new term $(t^{2n} + t^{-2n})\eta$ is generated. Let $n=2m$. The recurrence for the correction term is:
\begin{equation*}
\epsilon_{n+1} = (t^{4m} + t^{-4m})\eta + (1,2)_T \epsilon_{2m} - \epsilon_{2m-1}
\end{equation*}
We expand the term $(1,2)_T\epsilon_{2m}$. Note that for $n=2m$, the index $n-1-2k$ is always odd, so the Kronecker delta in the formula for $\epsilon_{2m}$ is always zero.
\begin{align*}
(1,2)_T \epsilon_{2m} &= (1,2)_T \ast \eta \left[ \sum_{k=0}^{m-1} T_{2m-1-2k}(1,2) L_k \right] \\
&= \eta \left[ \sum_{k=0}^{m-1} (T_{2m-2k}(1,2)_T + T_{2m-2-2k}(1,2)) L_k \right] \\
&= \eta \left[ \sum_{k=0}^{m-1} T_{2m-2k}(1,2)L_k \right] + \eta \left[ \sum_{k=0}^{m-1} T_{2m-2-2k}(1,2)L_k \right]
\end{align*}
The second summation is almost $\epsilon_{2m-1}$. For $n=2m-1$, the index $n-2-2k = 2m-3-2k$ can be zero, so we must use the Kronecker delta form:
\begin{equation*}
\epsilon_{2m-1} = \eta \left[ \sum_{k=0}^{m-1} (T_{2m-2-2k}(1,2)_T - \delta_{2m-2-2k,0}) L_k \right]
\end{equation*}
Substituting this, we find that the expression $(1,2)_T\epsilon_{2m} - \epsilon_{2m-1}$ simplifies to:
\begin{equation*}
\eta \left[ \left( \sum_{k=0}^{m-1} T_{2m-2k}(1,2)L_k \right) + L_{m-1} \right]
\end{equation*}
Now, we add the new term back into the recurrence for $\epsilon_{n+1}$:
\begin{equation*}
\epsilon_{n+1} = (t^{4m} + t^{-4m})\eta + \eta \left[ \left( \sum_{k=0}^{m-1} T_{2m-2k}(1,2)L_k \right) + L_{m-1} \right]
\end{equation*}
We use the identity $L_m = L_{m-1} + t^{4m} + t^{-4m}$ to combine the terms:
\begin{align*}
\epsilon_{n+1} &= \eta \left[ \left( \sum_{k=0}^{m-1} T_{2m-2k}(1,2)L_k \right) + (t^{4m} + t^{-4m} + L_{m-1}) \right] \\
&= \eta \left[ \left( \sum_{k=0}^{m-1} T_{2m-2k}(1,2)L_k \right) + L_m \right]
\end{align*}
This is almost the target formula for $\epsilon_{n+1}$. The target is $\eta \left[ \sum_{k=0}^{m} (T_{2m-2k} - \delta_{2m-2k,0}) L_k \right]$. Expanding this gives:
\begin{equation*}
\eta \left[ \left( \sum_{k=0}^{m-1} T_{2m-2k}(1,2)L_k \right) + (T_0-1)L_m \right] = \eta \left[ \left( \sum_{k=0}^{m-1} T_{2m-2k}(1,2)L_k \right) + L_m \right]
\end{equation*}
The expressions match. This completes the proof for the case where $n$ is even.
\end{proof}

\begin{remark}
Notice that 
\[
L_k = \left( \sum_{l=-k}^{k} t^{4l} \right) = \left( \sum_{l=0}^{k} S_{2l}(t^2+t^{-2}) \right).
\]
In the following section we will use the $S_j$ notation for coefficients.
\end{remark}

\begin{corollary}[Determinant-two transport]\label{cor:det-two-transport}
Let $C_1$ and $C_2$ be primitive simple curves on $\Sigma_{1,1}$ with
\[
|\det(C_1,C_2)|=2.
\]
Then the closed formula for the standard family
\[
P_n=T_n((1,2))\ast(1,0)
\]
transports, under the mapping-class-group action on $K_t(\Sigma_{1,1})$, to a closed formula for the corresponding product
\[
T_n(C_2)\ast C_1,
\]
up to the order determined by the chosen diffeomorphism.
\end{corollary}

\begin{proof}
By Cho's classification of determinant-two primitive pairs, there is a mapping-class-group element carrying the pair $(C_2,C_1)$ to the standard pair $((1,2),(1,0))$, possibly after changing orientations. The mapping-class group acts on $K_t(\Sigma_{1,1})$ by algebra automorphisms and preserves the peripheral element $\eta$. Applying the inverse automorphism to the standard formula for $P_n$ therefore gives the corresponding formula for $T_n(C_2)\ast C_1$. Since the automorphism preserves products, threading, and $\eta$-corrections, the transported expression has the same coefficient structure as \eqref{eq:intro-Pn-main}--\eqref{eq:intro-Pn-eps}.
\end{proof}

\begin{remark}[Order of the factors]
If one instead wants the product
\[
C_1\ast T_n(C_2),
\]
one applies the same transport argument after choosing the diffeomorphism that identifies the ordered pair with the flipped standard pair. Equivalently, the reversed-order formula is obtained from the same determinant-two calculation with the determinant sign reversed; the Frohman--Gelca coefficients are interchanged, while the parity pattern governing the $\eta$-correction is unchanged.
\end{remark}

\section{The Primitive Maximal-Thread Case}
\label{sec:max-thread}

We work with the Kauffman parameter $t$ (so $\bigcirc=-t^2-t^{-2}$). On $\Sigma_{1,1}$ the
peripheral loop $\eta$ is \emph{central} in $K_t(\Sigma_{1,1})$, and for any fixed primitive simple
closed curve $C$ the family $\{T_m(C)\}_{m\ge0}$ is \emph{linearly independent} over
$\mathbb{C}[t^{\pm1},\eta]$ (e.g.\ via the Frohman--Gelca embedding or the quantum trace \cite{frohmangelca,bonahonwong}). We use these facts tacitly.

We analyze the case where one Frohman--Gelca summand is maximally threaded. Let
\[
\alpha=(p,q),\qquad \beta=(r,s),\qquad n=\det\!\begin{pmatrix}p&r\\ q&s\end{pmatrix}\ge2,
\]
and set $C_\pm=\alpha\pm\beta$, $d_\pm=\gcd(C_\pm)$. Assume
\[
\max\{d_+,d_-\}=n\qquad\text{and}\qquad \min\{d_+,d_-\}\in\{1,2\}.
\]
Write $\varepsilon=\operatorname{sgn}(n)$ and let $C_\ast$ denote the \emph{maximally threaded}
Frohman--Gelca summand, i.e.\ $d_\ast=n$.

\begin{proposition}[Threaded $\eta$-Correction Structure]\label{prop:threaded}
Let $\alpha=(p,q)$ and $\beta=(r,s)$ be primitive on $\Sigma_{1,1}$ with
$n=\det\!\begin{psmallmatrix}p&r\\ q&s\end{psmallmatrix}\ge2$.
Set $C_{\pm}=\alpha\pm\beta$ and $d_{\pm}=\gcd(C_{\pm})$.
Assume $\max\{d_+,d_-\}=n$ and $\min\{d_+,d_-\}\in\{1,2\}$.
Then, in $K_t(\Sigma_{1,1})$,
\[
\alpha\ast\beta
= t^{\varepsilon n}\,T_{d_+}(C_+/d_+)\;+\;t^{-\varepsilon n}\,T_{d_-}(C_-/d_-)\;+\;\epsilon,
\]
where $\varepsilon=\operatorname{sgn}(n)$ and the correction term is
\[
\epsilon
= \eta\ \ast\sum_{j=0}^{\lfloor (n-2)/2\rfloor}
t^{\varepsilon(n-2-2j)}\Big(T_{n-2-2j}(C_\ast/n)-\delta_{n-2-2j,0}\Big)\,S_j(t^2+t^{-2}),
\]
with $C_\ast$ the maximally threaded Frohman-Gelca summand ($C_\ast=n\mu_\ast$ with $\mu_\ast$ primitive).
\end{proposition}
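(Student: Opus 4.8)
The plan is to collapse the general maximal-thread product onto a single one-parameter family by a change of basis, derive a Chebyshev-type second-order recursion in the determinant $n$, and then run an induction in which the main Frohman--Gelca terms and the $\eta$-cascade separate exactly as in the proof of Proposition~\ref{thm:main}. For the reduction: if $C_+=\alpha+\beta=n\mu_+$ with $\mu_+$ primitive (the maximally threaded summand; the case $C_-=n\mu_-$ is treated below), then $\det(\alpha,\mu_+)=\tfrac1n\det(\alpha,\beta)=1$, so $\{\alpha,\mu_+\}$ is an oriented basis of $H_1(\Sigma_{1,1};\mathbb Z)$. Since the mapping class group surjects onto $\mathrm{SL}_2(\mathbb Z)$ and acts on $K_t(\Sigma_{1,1})$ by $\eta$-fixing algebra automorphisms, we may take $\mu_+=(0,1)$, $\alpha=(1,0)$, whence $\beta=n\mu_+-\alpha=(-1,n)$ and the product is normalized to $Q_n:=(1,0)\cdot(-1,n)$, $n\ge2$. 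Its Frohman--Gelca summands are $C_+=(0,n)=n(0,1)$ (so $C_\ast=C_+$, $\mu_\ast=(0,1)$) and $C_-=(2,-n)$ with $d_-=\gcd(2,n)\in\{1,2\}$. The opposite case, where $C_-$ is threaded, is obtained from this one by the orientation-reversing involution $(x,y)\mapsto(-x,y)$, which acts on $K_t$ semilinearly with $t\mapsto t^{-1}$, fixes $\eta$ and every $T_m((0,1))$, and sends $(2,-n)_T\mapsto(2,n)_T$; this reproduces the stated formula, the exponent $t^{\varepsilon(n-2-2j)}$ carrying the sign with which $C_\ast$ appears in the main sum ($\varepsilon=\operatorname{sgn}(n)=+1$ throughout).

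For the recursion: from the Frohman--Gelca rule and the vanishing of the $\det=\pm1$ correction (Wang--Wong) one has $(-1,n)_T=t\,(-1,n-1)_T\cdot(0,1)_T-t^2(-1,n-2)_T$, and associativity gives
\[
Q_n=t\,Q_{n-1}\cdot(0,1)_T-t^2\,Q_{n-2}\qquad(n\ge2).
\]
Right multiplication by $(0,1)_T$ is transparent on each constituent: on a parallel-type term $T_m((0,1))$ it is $T_mT_1=T_{m+1}+T_{m-1}$ with no correction; on the second main term $(2,-(n-1))_T$ it is a $\det=2$ product governed by Cho's rule, producing a new $\eta$ precisely when $(2,-(n-1))$ is simple, i.e.\ when $n$ is even (the analogue of Lemma~\ref{lem:parity}), and nothing when $n$ is odd; and on an $\eta$-monomial $\eta\,T_m((0,1))$ it is again the Chebyshev relation. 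Thus a single term $t^{\,2-n}\eta$ is injected at even $n$. This one-sidedness, in contrast with the symmetric two-sided injection $(t^{2n}+t^{-2n})\eta$ of Proposition~\ref{thm:main}, is exactly what collapses the coefficients from the sums $L_k$ to single Chebyshev values $S_j(t^2+t^{-2})$.

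The induction then has base cases $n=2$ (Cho: $\epsilon_2=\eta$) and $n=3$ (direct: $\epsilon_3=t\,\eta(0,1)_T$), both matching the claimed form under the convention $T_0=2$, so $T_0-\delta_{0,0}=1$. Assuming the formula for $Q_{n-1},Q_{n-2}$ and feeding it into the recursion, the main terms $t^{n}T_n((0,1))+t^{-n}(2,-n)_T$ reproduce themselves because the overshoot pieces $t^{n}T_{n-2}((0,1))$ and $t^{-(n-4)}(2,-(n-2))_T$ are cancelled by $-t^2Q_{n-2}$ — the same cancellation as the main-term bookkeeping in Proposition~\ref{thm:main}. The correction then obeys $\epsilon_n=t\,\epsilon_{n-1}\cdot(0,1)_T-t^2\epsilon_{n-2}+[\text{new }\eta]$, and one verifies that the proposed closed form solves this, splitting by parity as in Proposition~\ref{thm:main}: for $n$ odd the recursion is homogeneous and the Chebyshev identity together with $S_j(x)=xS_{j-1}(x)-S_{j-2}(x)$ for $x=t^2+t^{-2}$ regenerates $\epsilon_n$; for $n$ even one regroups the $t$-powers so that the injected $t^{\,2-n}\eta$ is absorbed into the bottom summand of the cascade, the Kronecker delta recording the degeneration of $T_0((0,1))=2$ to $1$ there. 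Finally, $\mathbb C[t^{\pm1},\eta]$-linear independence of $\{T_m(\mu_\ast)\}$ upgrades the inductive equality in $K_t$ to the stated identity; transporting back along the diffeomorphism (and the involution, for the $C_-$ case) yields the proposition, and specializing $n=2$ leaves the single cascade term $(T_0-1)S_0\,\eta=\eta$, recovering Cho's $|\det|=2$ value and, term by term, the Wang--Wong output.

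The step I expect to be the main obstacle is this verification that the closed form solves the inhomogeneous correction recursion. Unlike the symmetric two-sided injection of Proposition~\ref{thm:main}, the new-$\eta$ source here is one-sided and carries the asymmetric power $t^{\,2-n}$, so matching it — together with the exponent $t^{\varepsilon(n-2-2j)}$ and the coefficient $S_j(t^2+t^{-2})$ — against the right-multiplication-by-$(0,1)$ recursion requires careful bookkeeping at both ends of the cascade: at $j=0$, where $T_{n-2}((0,1))$ is the top thread, and at the index with $n-2-2j=0$, where the delta fires. A secondary point needing care is keeping the sign conventions straight in the $C_-$-threaded case via the orientation-reversing involution.
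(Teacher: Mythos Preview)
Your approach is correct and genuinely different from the paper's. Both proofs begin by using the $\mathrm{SL}_2(\mathbb Z)$-action to normalize, but thereafter they diverge. The paper invokes the Wang--Wong Recursion Rule (their Theorem~3.1) as a black box to obtain a three-term recursion for the discrepancy $F_n=D(\tilde\alpha,\tilde\beta)$, proves a separate ``cascade identity'' (Lemma~\ref{lem:cascade}) showing that the proposed $G_n$ satisfies the same recursion, and then appeals to the uniqueness/termination of the Wang--Wong algorithm to identify $F_n\equiv G_n$. You instead derive the recursion $Q_n=t\,Q_{n-1}\cdot(0,1)_T-t^2 Q_{n-2}$ directly from the $|\det|=1$ Frohman--Gelca rule and run an induction on the full product, separating main terms from correction exactly as in the proof of Proposition~\ref{thm:main}. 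What this buys you is self-containment: your argument does not need the external Wang--Wong theorem, and it makes the structural parallel with the $P_n$ family completely explicit. What the paper's route buys is modularity: the cascade identity is isolated as a standalone lemma and the matching step is outsourced to Wang--Wong's uniqueness.

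Your identified ``main obstacle''---verifying that the closed form solves the parity-dependent inhomogeneous recursion---does go through, and the key identity needed at the even step is $S_m(t^2+t^{-2})-t^2 S_{m-1}(t^2+t^{-2})=t^{-2m}$ with $m=(n-2)/2$, which absorbs the one-sided injection $t^{2-n}\eta$ into the bottom of the cascade. Your observation that the one-sided injection (versus the two-sided $(t^{2n}+t^{-2n})\eta$ in Proposition~\ref{thm:main}) is precisely what collapses the $L_k$ sums to single $S_j$ values is a nice structural insight not spelled out in the paper. One small simplification: for the $C_-$-threaded case the paper just replaces $\beta$ by $-\beta$ (same curve, swaps $C_\pm$), which is lighter than routing through the orientation-reversing involution with its $t\mapsto t^{-1}$ semilinearity.
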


\paragraph{Preliminaries for the proof.}
Before proving Proposition~\ref{prop:threaded}, we record two auxiliary ingredients that will be
used repeatedly. First, an arithmetic normal form (and its corollary) places any pair
$(\alpha,\beta)$ in a canonical $SL_2(\mathbb Z)$-frame where one Frohman-Gelca summand is $n$
times a primitive and makes the thread degrees $d_\pm$ explicit. Second, a Chebyshev
``cascade'' identity solves the specialized three-term recurrence that arises from the
Recursion Rule once we fix the maximally threaded direction.


\begin{lemma}[SL$_2(\mathbb Z)$ normal form and thread degrees]\label{lem:NF}
Let $u=(p,q)$ and $v=(r,s)$ be primitive with $\det(u,v)=n\ge2$. There is $M\in\mathrm{SL}_2(\mathbb Z)$
such that $M u=(1,0)$ and $M v=(a,n)$ with $0\le a<n$ and $\gcd(a,n)=1$. In this normal form
\[
d_+=\gcd(1+a,n),\qquad d_-=\gcd(1-a,n),
\]
where $d_\pm=\gcd(u\pm v)$ are the thread degrees of the Frohman-Gelca summands.
\end{lemma}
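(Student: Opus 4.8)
The plan is to build $M$ directly from the primitivity of $u$ and then read off $d_\pm$ by $\mathrm{SL}_2(\mathbb Z)$-invariance of $\gcd$.

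First, since $u=(p,q)$ is primitive, $\gcd(p,q)=1$, so there exist integers $x,y$ with $py - qx = 1$; the matrix $M_0=\begin{psmallmatrix} y & -x \\ -q & p \end{psmallmatrix}$ lies in $\mathrm{SL}_2(\mathbb Z)$ and satisfies $M_0 u = (1,0)$ (viewing $u$ as a column). Apply $M_0$ to $v$: write $M_0 v = (a_0, b_0)$. Because $\det$ is $\mathrm{SL}_2(\mathbb Z)$-invariant and $\det(u,v)=n$, we get $\det((1,0),(a_0,b_0)) = b_0 = n$, so $M_0 v = (a_0, n)$. To normalize the first coordinate, note that the shear $U_c = \begin{psmallmatrix} 1 & c \\ 0 & 1\end{psmallmatrix}$ fixes $(1,0)$ and sends $(a_0,n)\mapsto(a_0+cn, n)$; choosing $c$ so that $a := a_0 + cn$ lies in $\{0,1,\dots,n-1\}$ and setting $M = U_c M_0$ gives $Mu=(1,0)$, $Mv=(a,n)$ with $0\le a<n$. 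Finally, $v$ primitive forces $\gcd(a,n)=\gcd(Mv)=1$.

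For the thread degrees, observe that $u\pm v$ transforms to $M(u\pm v)=Mu\pm Mv=(1\pm a,\ \pm n)$, and $\gcd$ is invariant under $\mathrm{SL}_2(\mathbb Z)$ (an invertible integer matrix and its inverse both preserve the lattice $\mathbb Z\cdot g$ where $g=\gcd$). Hence $d_+=\gcd(u+v)=\gcd(1+a,n)$ and $d_-=\gcd(u-v)=\gcd(1-a,n)=\gcd(a-1,n)$, as claimed.

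The argument is essentially routine linear algebra over $\mathbb Z$; the only point demanding a little care is the $\mathrm{SL}_2(\mathbb Z)$-invariance of $\gcd$ of a vector's entries, which I would justify by noting that for $M\in\mathrm{SL}_2(\mathbb Z)$ and any $w\in\mathbb Z^2$, every entry of $Mw$ is an integer combination of entries of $w$ (so $\gcd(w)\mid\gcd(Mw)$) and symmetrically $\gcd(Mw)\mid\gcd(M^{-1}Mw)=\gcd(w)$. I do not anticipate a genuine obstacle here; the lemma is a packaging step whose purpose is to make $d_\pm$ explicit for the cascade computation in the proof of Proposition \ref{prop:threaded}.
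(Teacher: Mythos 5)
Your proof is correct and follows essentially the same route as the paper: the paper simply cites Smith normal form (i.e.\ unimodular reduction of the matrix $[u\ v]$) together with $\mathrm{SL}_2(\mathbb Z)$-invariance of the coordinate $\gcd$, and your Bézout-plus-shear construction of $M$ is exactly that reduction carried out explicitly, with the invariance argument spelled out. No gaps; if anything, your version is more self-contained than the paper's one-line appeal.
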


\begin{proof}
This is Smith normal form for the $2\times2$ integer matrix $[u\ v]$, together with the fact that
$\gcd$ of coordinates is preserved under unimodular maps. The formulas for $d_\pm$ are immediate.
\end{proof}

\begin{corollary}[Characterization of the small co-thread regime]\label{cor:small}
With notation as in Lemma~\ref{lem:NF}, the following are equivalent:
\begin{enumerate}
\item $\max\{d_+,d_-\}=n$ and $\min\{d_+,d_-\}\in\{1,2\}$.
\item $a\equiv \pm 1 \pmod n$.
\item $u\equiv \pm v \pmod n$ (componentwise congruence).
\end{enumerate}
In this case the non‑maximal thread degree is $\gcd(n,2)$; in particular it equals $1$ if $n$ is odd and $2$ if $n$ is even.
\end{corollary}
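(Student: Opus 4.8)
The plan is to work entirely inside the $\mathrm{SL}_2(\mathbb{Z})$-normal form furnished by Lemma~\ref{lem:NF}: replace $u=(p,q)$, $v=(r,s)$ by $Mu=(1,0)$ and $Mv=(a,n)$ with $0\le a<n$ and $\gcd(a,n)=1$, so that $d_+=\gcd(1+a,n)$ and $d_-=\gcd(1-a,n)$. The first step is the elementary bookkeeping observation that $n\ge2$ together with $\gcd(a,n)=1$ forces $1\le a\le n-1$, hence $2\le 1+a\le n$ and $0\le a-1\le n-2$; consequently $d_+=\gcd(1+a,n)\le n$ with equality exactly when $a=n-1$, and $d_-=\gcd(a-1,n)\le n$ with equality exactly when $a=1$. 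This single inequality already does most of the work.

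I would then establish $(1)\Leftrightarrow(2)$ and $(2)\Leftrightarrow(3)$. For $(1)\Rightarrow(2)$: the hypothesis $\max\{d_+,d_-\}=n$ forces $a\in\{1,n-1\}$ by the first step, i.e.\ $a\equiv\pm1\pmod n$. For $(2)\Leftrightarrow(3)$: since $\det M=\pm1$ is a unit modulo $n$, the reduction $M\bmod n$ is invertible over $\mathbb{Z}/n\mathbb{Z}$, so componentwise congruence mod $n$ is preserved and reflected by $M$; as $Mu\equiv(1,0)$ and $Mv\equiv(a,0)\pmod n$, the relation $u\equiv\pm v\pmod n$ is equivalent to $1\equiv\pm a\pmod n$, i.e.\ to $a\equiv\pm1\pmod n$ (which, given $0\le a<n$, means $a=1$ or $a=n-1$, and these coincide precisely when $n=2$).

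For $(2)\Rightarrow(1)$, which simultaneously yields the final sentence of the corollary: if $a\equiv1\pmod n$ then $n\mid(1-a)$, so $d_-=n$, while $1+a\equiv2\pmod n$ gives $d_+=\gcd(1+a,n)=\gcd(2,n)$, which equals $1$ when $n$ is odd and $2$ when $n$ is even; the case $a\equiv-1\pmod n$ is identical with the roles of $d_+$ and $d_-$ interchanged. In either case $\max\{d_+,d_-\}=n$ and the non-maximal thread degree is $\gcd(n,2)\in\{1,2\}$, proving both $(1)$ and the closing assertion.

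I do not anticipate a genuine obstacle here: once the normal form of Lemma~\ref{lem:NF} is in hand, the entire statement is $\gcd$-bookkeeping. The only point worth a sentence of care is the degenerate case $n=2$, where $a=1=n-1$ and $d_+=d_-=2=n$, so that ``$\max$'' and ``$\min$'' collapse to the same value; I would simply remark that this case satisfies all three conditions at once and is consistent with $\gcd(2,2)=2$, so it needs no separate handling.
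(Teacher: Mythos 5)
Your proposal is correct and follows essentially the same route as the paper: read off $d_\pm=\gcd(1\pm a,n)$ in the normal form of Lemma~\ref{lem:NF}, note that $\max\{d_+,d_-\}=n$ forces $a\equiv\pm1\pmod n$ (and conversely that $a\equiv\pm1$ gives the non-maximal degree $\gcd(2,n)$), and pass between (2) and (3) via invertibility of $M$ modulo $n$. The extra remark about the degenerate case $n=2$ is a harmless refinement the paper leaves implicit.
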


\begin{proof}
If $a\equiv1\pmod n$, then $d_-=\gcd(1-a,n)=n$ and $d_+=\gcd(1+a,n)=\gcd(2,n)\in\{1,2\}$. If
$a\equiv-1\pmod n$, the roles swap. Conversely, if $\max\{d_+,d_-\}=n$, then either $1+a\equiv0$ or
$1-a\equiv0\pmod n$, i.e. $a\equiv\pm1\pmod n$. For (ii)\(\Rightarrow\)(iii), if $Mu=(1,0)$ and $Mv=(a,n)$, then $(1,0)\equiv\pm(a,n)\pmod n$ is equivalent to $u\equiv\pm v\pmod n$ by unimodularity of $M$. The reverse implication is identical.
\end{proof}

\begin{remark}[Quick test and parametrization]
Practically: given primitive $u,v$ with $n = \det(u,v) \geq 2$, compute $u\pm v$ modulo $n$. If one of
$u\pm v$ vanishes mod $n$, we are in the regime of Proposition~\ref{prop:threaded}
with the other thread degree $=\gcd(n,2)$. Equivalently, every such pair is SL$_2(\mathbb Z)$-equivalent to
$\big((1,0),(\pm1,n)\big)$; a convenient parametrization is $v=\pm u + n w$, where $w$ is any primitive vector with $\det(u,w)=1$.
\end{remark}

\begin{lemma}[Chebyshev cascade identity]\label{lem:cascade}
Let $\mu=(0,1)$ and put $\varepsilon=\operatorname{sgn}(n)$, $x:=t^2+t^{-2}$.
Define
\[
G_n \;:=\;\sum_{j=0}^{\lfloor (n-2)/2\rfloor}
t^{\varepsilon(n-2-2j)}\Big(T_{n-2-2j}(\mu)-\delta_{n-2-2j,0}\Big)\,S_j(x).
\]
Then
\begin{equation}\label{eq:cascade}
\mu_T\ast G_n \;=\; t^{\varepsilon}\,G_{n-1} \;+\; t^{-\varepsilon}\,G_{n+1} \;-\; t^{-\varepsilon n}\,.
\end{equation}
\end{lemma}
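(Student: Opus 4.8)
The plan is to verify the identity by direct expansion, using only the Chebyshev product rule $T_1(\mu)\cdot T_m(\mu) = T_{m+1}(\mu) + T_{m-1}(\mu)$ (valid for $m\ge 1$, with the convention $T_1\cdot T_0 = 2T_1$ and $T_1\cdot T_1 = T_2 + 2$) and elementary manipulations of the $S_j$ coefficients. Write $\mu_T = T_1(\mu)$. Multiplying $G_n$ term by term gives
\[
\mu_T\cdot G_n = \sum_{j=0}^{\lfloor (n-2)/2\rfloor} t^{\varepsilon(n-2-2j)}\Big(T_{n-1-2j}(\mu) + T_{n-3-2j}(\mu) - 2\,\delta_{n-2-2j,0}\Big)S_j(x),
\]
where the $T_{n-3-2j}$ term should be read via the flattening conventions when its index drops to $0$ or below. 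The strategy is then to split this into two sums: the $T_{n-1-2j}$-part, which after reindexing I expect to match $t^{\varepsilon}G_{n-1}$ up to a boundary term, and the $T_{n-3-2j}$-part, which after reindexing ($j\mapsto j-1$) should produce $t^{-\varepsilon}G_{n+1}$ up to boundary terms. The constant $-t^{-\varepsilon n}$ on the right-hand side is exactly what is needed to absorb the leftover boundary contributions (the delta corrections and the index-$0$/negative-index flattening), so the final step is a careful bookkeeping of those endpoint terms.

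Concretely, I would first handle the generic interior terms where all Chebyshev indices stay $\ge 1$; there the reindexing is clean and the powers of $t^{\varepsilon}$ line up because $t^{\varepsilon(n-2-2j)} = t^{\varepsilon}\cdot t^{\varepsilon((n-1)-2-2j)}$ for the first sum and $t^{\varepsilon(n-2-2j)} = t^{-\varepsilon}\cdot t^{\varepsilon((n+1)-2-2(j+1))}$ for the second. One also needs the $S$-coefficient identity relating consecutive sums; since $G_m$ uses $S_j(x)$ with the same $j$-range shape, no $S$-recurrence beyond the defining one is required—the coefficients simply carry along under reindexing. The parity of $n$ controls whether the upper summation limit $\lfloor(n-2)/2\rfloor$ shifts by the "expected" amount when passing to $n\pm1$, so I would split into the cases $n$ even and $n$ odd and track the single extra or missing top term in each; this is where the $\delta_{n-2-2j,0}$ corrections in the definition of $G_n$ (present only when $n$ is even) enter and must cancel against the flattening of $T_0$ and the $-t^{-\varepsilon n}$ term.

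The main obstacle is precisely this endpoint accounting: the definition of $G_n$ already contains a subtracted $\delta$-term (the "$-1$" replacing $T_0$), and multiplying by $\mu_T$ both lowers indices into the $T_1\cdot T_1 = T_2+2$ and $T_1\cdot T_0 = 2T_1$ regimes and shifts which terms hit index $0$. Getting the constant $2$'s, the $\delta$-subtractions, and the lone $-t^{-\varepsilon n}$ to reconcile requires treating small $n$ (say $n=2,3$) as explicit base checks and then confirming that the general reindexed expression differs from $t^{\varepsilon}G_{n-1}+t^{-\varepsilon}G_{n+1}$ by exactly $-t^{-\varepsilon n}$. Once the boundary terms are pinned down in the even and odd cases separately, the identity follows by comparing coefficients of each $T_m(\mu)$ and of the constant term, using the linear independence of $\{T_m(\mu)\}$ over $\mathbb{C}[t^{\pm1}]$ noted at the start of the section.
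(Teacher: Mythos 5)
Your overall strategy is the same as the paper's: expand $\mu_T\cdot G_n$ termwise via $T_1T_m=T_{m+1}+T_{m-1}$, match the two resulting halves against $t^{\varepsilon}G_{n-1}$ and $t^{-\varepsilon}G_{n+1}$, and let the endpoint terms account for the constant. However, the two concrete steps you actually write down are both incorrect. First, the delta term: $\mu_T\cdot\delta_{n-2-2j,0}$ contributes $T_1\,\delta_{n-2-2j,0}$, not $2\,\delta_{n-2-2j,0}$; at the endpoint $k=n-2-2j=0$ one has $\mu_T\,(T_0-1)=2T_1-T_1=T_1$, whereas your displayed expression $T_{n-1-2j}+T_{n-3-2j}-2\delta_{n-2-2j,0}$, read with $T_{-1}=T_1$, gives $2T_1-2$. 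Second, your pairing of the halves with $G_{n\mp1}$ is swapped: the $j$th summand of $t^{\varepsilon}G_{n-1}$ involves $T_{(n-1)-2-2j}=T_{n-3-2j}$, so it is the $T_{n-3-2j}$ half that matches $t^{\varepsilon}G_{n-1}$ at the \emph{same} index $j$, while the $T_{n-1-2j}$ half matches $t^{-\varepsilon}G_{n+1}$ at the same $j$ (because $t^{-\varepsilon}\,t^{\varepsilon((n+1)-2-2j)}=t^{\varepsilon(n-2-2j)}$). Your exponent identity $t^{\varepsilon(n-2-2j)}=t^{-\varepsilon}\,t^{\varepsilon((n+1)-2-2(j+1))}$ is false (the right-hand side equals $t^{\varepsilon(n-4-2j)}$), and with the correct pairing no reindexing $j\mapsto j\pm1$ is needed at all: the only discrepancies are endpoint terms.

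That endpoint bookkeeping, which you defer, is the entire content of the lemma, and it is parity-sensitive in a way your assertion that ``$-t^{-\varepsilon n}$ is exactly what is needed to absorb the leftovers'' does not survive unexamined. With the stated conventions ($T_0=2$, the $\delta$-subtraction, $S_j(x)$ with $x=t^2+t^{-2}$), for odd $n$ the leftover after matching is $t^{\varepsilon}S_{(n-3)/2}(x)-t^{-\varepsilon}S_{(n-1)/2}(x)$, coming from the $k=1$ term of $\mu_T G_n$ (which produces $2t^{\varepsilon}S_{(n-3)/2}$ against $t^{\varepsilon}(T_0-1)S_{(n-3)/2}$ in $t^{\varepsilon}G_{n-1}$) and the extra top summand $t^{-\varepsilon}S_{(n-1)/2}$ of $t^{-\varepsilon}G_{n+1}$; a short computation shows this difference equals $-t^{-\varepsilon n}$, which is where the constant in \eqref{eq:cascade} originates. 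For even $n$, by contrast, the $k=0$ term of $\mu_T G_n$ contributes $T_1S_{(n-2)/2}$, which is absorbed exactly by the extra summand of $t^{-\varepsilon}G_{n+1}$, and nothing is left over: e.g.\ $\mu_T G_4=t^{2}T_3+t^{2}T_1+xT_1=t\,G_3+t^{-1}G_5$ with no residual constant. So your own planned even-$n$ base check is precisely where the uniform-absorption claim breaks down, and the even/odd split you mention must be carried out explicitly rather than asserted. In short: same termwise route as the paper, but the written index/exponent matching would not go through as stated, and the decisive endpoint analysis --- the only nontrivial part --- is missing.
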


\begin{proof}
Write $k=n-2-2j$ (so $k\ge0$ and $k\equiv n\bmod 2$). Using
\[
\mu_T T_k(\mu)=T_{k+1}(\mu)+T_{k-1}(\mu)\quad(k\ge1),\qquad
S_{j+1}(x)=xS_j(x)-S_{j-1}(x),\quad S_0=1,\ S_1=x,
\]
we have, termwise,
\[
\mu_T\!\left(t^{\varepsilon k}\,[T_k(\mu)-\delta_{k,0}]\,S_j(x)\right)
= t^{\varepsilon k}\big(T_{k+1}+T_{k-1}-\delta_{k,0}\big)S_j(x).
\]
Reindex the $j$-sum: the $T_{k+1}$ part is the $j$th summand of $t^{\varepsilon}G_{n-1}$; the $T_{k-1}$ part
is the $(j+1)$st summand of $t^{-\varepsilon}G_{n+1}$. The only residue is when $k=0$,
where $T_0(\mu)=2$ and we have subtracted $1$; this yields the unit $1=T'_0$ and produces the final
$-\,t^{-\varepsilon n}$ in \eqref{eq:cascade}. Summing over $j$ gives the claim.
\end{proof}

\begin{proof}[Proof of Proposition~\ref{prop:threaded}]
We use the Frohman-Gelca splitting in the Chebyshev-$T$ basis:
\begin{equation}\label{eq:FG}
(\alpha)_T(\beta)_T
= t^{|n|}(C_+)_T+t^{-|n|}(C_-)_T \;+\; \eta\,D(\alpha,\beta),
\end{equation}
where $D$ is the discrepancy (see \cite[Def.\ 2.5/Eq.\ (1)]{wangwong}).

\paragraph{Normalization.}
By Corollary~\ref{cor:small}, one of $C_\pm$ equals $n$ times a primitive direction.
Replacing $\beta$ by $-\beta$ if needed, assume $C_\ast=C_+=n\mu_\ast$.
By the mapping-class action (e.g. \cite[Lem.\ 3.2]{wangwong}), there exists $\phi\in SL_2(\mathbb Z)$ with
$\phi_*(\mu_\ast)=(0,1)=:\mu$; the action preserves \eqref{eq:FG} and the discrepancy.
Hence it suffices to prove the claim for $(\tilde\alpha,\tilde\beta):=\phi_*(\alpha,\beta)$ with
$\tilde C_+=n\mu$ and $d_-:=\gcd(\tilde C_-)\in\{1,2\}$, then undo $\phi_*$.

\paragraph{Specialized recurrence.}
Let $F_n:=D(\tilde\alpha,\tilde\beta)$ and $B:=D(\mu;\tilde C_-)$.
Apply the Recursion Rule (\cite[Thm.\ 3.1]{wangwong}) with $u=\mu$.
Since $D(\mu;\tilde C_+)=0$ for $\det=0$ (\cite[Lem.\ 2.6]{wangwong}), we obtain
\begin{equation}\label{eq:RR}
\mu_T\ast F_n \;=\; t^{\varepsilon}F_{n-1} \;+\; t^{-\varepsilon}F_{n+1} \;-\; t^{-\varepsilon n}\,B.
\end{equation}

\paragraph{Inhomogeneous term.}
Because $d_-\in\{1,2\}$, the small closed forms (\cite[Prop.\ 4.1-4.3]{wangwong}) give $B=T'_0=1$ in the $T'$-normalization.
With our $T_0=2$, we will compensate by the Kronecker subtraction in $G_n$ below.

\paragraph{Identification of the discrepancy.}
Define $G_n$ as in Lemma~\ref{lem:cascade}. By that lemma, $G_n$ satisfies \eqref{eq:RR} with $B=1$.
For $|n|\le1$ the discrepancy vanishes (\cite[Lem.\ 2.6-2.8]{wangwong}); for $n=2$ our subtraction at $T_0$ makes $G_2=0$.
Thus $F_n$ and $G_n$ agree on bases, and by the algorithm's uniqueness/termination
(\cite[Prop.\ 5.1, Rem.\ 5.2, Prop.\ 5.4]{wangwong}) we conclude $F_n\equiv G_n$.

Insert $D=G_n$ into \eqref{eq:FG} for $(\tilde\alpha,\tilde\beta)$ to obtain
\[
(\tilde\alpha)_T(\tilde\beta)_T
= t^{\varepsilon n}\,T_{n}(\mu) \;+\; t^{-\varepsilon n}\,T_{d_-}(\tilde C_-/d_-)
\;+\; \eta\,G_n.
\]
Undoing $\phi_*$ yields the displayed formula with $C_\ast/n=\mu_\ast$, i.e. with

$\epsilon= \eta G_n$ expressed in the direction of the maximally threaded summand.
\end{proof}

\begin{remark}[Coefficient dictionary]\label{rem:dictionary}
For $x=t^2+t^{-2}$,
\[
S_j(x)=\frac{t^{2(j+1)}-t^{-2(j+1)}}{t^2-t^{-2}},
\]
hence each summand of $\epsilon(\alpha,\beta)$ carries coefficient
$t^{\varepsilon(n-2-2j)}S_j(x)$, matching the rational coefficients that appear in the
fast algorithm simplifications in \cite[\S~4]{wangwong}.
\end{remark}

See Section~\ref{app:ww-example} for full computations and the comparison with the Wang-Wong algorithm.

\appendix
\label{appendix}

\section{Detailed Calculations}

\subsection*{Calculation for n=3}
The product is $P_3 = (3,6)_T \ast (1,0)_T$. We use the recurrence $P_3 = (1,2)_T \ast P_2 - P_1$.
\begin{align}
P_3 &= (1,2)_T \ast [t^{-4}(3,4)_T + t^{4}(1,4)_T + (1,2)_T\eta] - P_1
\end{align}
The intermediate products $|det|=2$ between simple curves, generating new $\eta$ terms.
\begin{align}
P_3 &= t^{-4}[t^{-2}(4,6)_T+t^{2}(2,2)_T+\eta] + t^{4}[t^{2}(2,6)_T+t^{-2}(0,2)_T+\eta] \\
&\quad + (1,2)_T^2\eta - P_1 \\[0.5em]
&= t^{-6}(4,6)_T+t^{-2}(2,2)_T+t^{-4}\eta + t^{6}(2,6)_T+t^{2}(0,2)_T+t^{4}\eta \\
&\quad + ((2,4)_T+2)\eta - P_1
\end{align}
We substitute $P_1$ back in. The main terms $t^{-2}(2,2)_T+t^{2}(0,2)_T$ cancel with the main terms of $-P_1$, and one $\eta$ cancels.
\begin{align}
P_3 = (3,6)_T \ast (1,0)_T &= t^{-6}(4,6)_T + t^{6}(2,6)_T \\
&\quad + [ t^4 + t^{-4} + 1 + (2,4)_T ]\eta
\end{align}

\subsection*{Calculation for n=4}

The product is $P_4 = (4,8)_T \ast (1,0)_T$. We use the recurrence $P_4 = (1,2)_T \ast P_3 - P_2$.

\begin{align}
P_4 &= (1,2)_T \ast [t^{-6}(4,6)_T + t^{6}(2,6)_T + (t^4 + t^{-4} + 1 + (2,4))\eta] - P_2
\end{align}

The intermediate products involve composite curves, so no new $\eta$ terms are generated.

\begin{align}
P_4 &= t^{-6}[t^{-2}(5,8)_T+t^{2}(3,4)] + t^{6}[t^{2}(3,8)_T+t^{-2}(1,4)_T] \\
&\quad + (1,2)_T(t^4+t^{-4}+1)\eta + (1,2)_T \ast (2,4)_T\eta - P_2 \\[0.5em]
&= t^{-8}(5,8)_T+t^{-4}(3,4)_T + t^{8}(3,8)+t^{4}(1,4)_T \\
&\quad + (t^4+t^{-4}+1)(1,2)_T\eta + ((3,6)_T+(1,2)_T)\eta - P_2
\end{align}

We substitute $P_2$ back in. The main terms $t^{-4}(3,4)_T$ and $t^{4}(1,4)_T$ cancel, and one $(1,2)\eta$ cancels.

\begin{align}
P_4 = (4,8)_T \ast (1,0)_T &= t^{-8}(5,8)_T + t^{8}(3,8)_T \\
&\quad + [ (t^4 + t^{-4} + 1)(1,2)_T + (3,6)_T ]\eta
\end{align}

\subsection*{Calculation for n=5}
The product is $P_5 = (5,10)_T \ast (1,0)_T$. We use the recurrence $P_5 = (1,2)_T \ast P_4 - P_3$. We start by expanding the term $(1,2)_T \ast P_4$:
\begin{align*}
(1,2)_T \ast P_4 = (1,2)_T \ast \left[ t^{-8}(5,8)_T + t^{8}(3,8)_T + \epsilon_4 \right]
\end{align*}
where $\epsilon_4 = [ (t^4 + t^{-4} + 1)(1,2)_T + (3,6)_T ]\eta$.

The intermediate products $(1,2)_T \ast (5,8)_T$ and $(1,2)_T \ast (3,8)_T$ are between simple curves with $|det|=\pm 2$, and thus they generate new $\eta$ terms:
\begin{align*}
(1,2)_T \ast P_4 &= t^{-8}\left[t^{-2}(6,10)_T+t^{2}(4,6)_T+\eta\right] + t^{8}\left[t^{2}(4,10)_T+t^{-2}(2,6)_T+\eta\right] \\
&\quad + (1,2)_T \ast \epsilon_4 \\[0.5em]
&= t^{-10}(6,10)_T+t^{-6}(4,6)_T+t^{-8}\eta + t^{10}(4,10)_T+t^{6}(2,6)_T+t^{8}\eta \\
&\quad + (1,2)_T \ast \left[ (t^4 + t^{-4} + 1)(1,2)_T + (3,6)_T \right]\eta
\end{align*}
We expand the coefficient of $\eta$:
\begin{align*}
(1,2)_T \ast \epsilon_4 &= \left[ (t^4 + t^{-4} + 1)(1,2)_T^2 + (1,2)_T \ast (3,6)_T \right]\eta \\
&= \left[ (t^4 + t^{-4} + 1)((2,4)_T+2) + ((4,8)_T+(2,4)_T) \right]\eta \\
&= \left[ (t^4 + t^{-4} + 2)(2,4)_T + 2(t^4 + t^{-4} + 1) + (4,8)_T \right]\eta
\end{align*}
Now we assemble the full expression for $P_5 = (1,2)_T \ast P_4 - P_3$ and substitute the known formulas for $P_4$ and $P_3$. The main terms $t^{-6}(4,6)_T$ and $t^{6}(2,6)_T$ from the expansion cancel perfectly with the main terms from $-P_3$. The remaining terms form the new correction term $\epsilon_5$:
\begin{align*}
\epsilon_5 &= (t^8+t^{-8})\eta + (1,2)_T\epsilon_4 - \epsilon_3 \\
&= (t^8+t^{-8})\eta + \left[ (t^4 + t^{-4} + 2)(2,4)_T + 2(t^4 + t^{-4} + 1) + (4,8)_T \right]\eta \\
&\quad - \left[ t^4 + t^{-4} + 1 + (2,4)_T \right]\eta
\end{align*}
Collecting the coefficients of the skeins inside the $\eta$ term:
\begin{itemize}
    \item Scalar part: $(t^8+t^{-8}) + 2(t^4+t^{-4}+1) - (t^4+t^{-4}+1) = t^8+t^{-8}+t^4+t^{-4}+1$
    \item $(2,4)_T$ part: $(t^4+t^{-4}+2)(2,4)_T - (2,4)_T = (t^4+t^{-4}+1)(2,4)_T$
    \item $(4,8)_T$ part: $(4,8)_T$
\end{itemize}
This gives the final expression for the product:
\begin{align*}
P_5 = (5,10)_T \ast (1,0)_T &= t^{-10}(6,10)_T + t^{10}(4,10)_T + \epsilon_5
\end{align*}
where
\begin{align*}
\epsilon_5 &= \eta \left[ (t^8+t^{-8}+t^4+t^{-4}+1) + (t^4+t^{-4}+1)(2,4)_T + (4,8)_T \right]
\end{align*}


\section{Normalizations and comparison with Wang--Wong}\label{app:norm-ww}
\begin{remark}[Normalization: \(T_0\) vs.\ \(T'_0\)]
After developing our formulas, we compared them with the Wang-Wong fast algorithm \cite[p.9]{wangwong} and noticed a notational normalization that is worth recording. We package the terminal peel contribution using
\[
(T_0 - \delta_{0,0})\qquad\text{with }T_0=2,
\]
so that the last term in the cascade contributes \(S_j(t^2+t^{-2})\ast 1\).
Wang-Wong adopt the commonly used Chebyshev variation \(T'_0=1\) and \(T'_k=T_k\) for \(k\ge 1\). Equivalently,
\[
S_j(t^2+t^{-2})\,(T_0-\delta_{0,0}) \;=\; S_j(t^2+t^{-2})\,T'_0
\]
and their printed \(T(0,0)\) should be read as the unit \(1\).
This dictionary eliminates the apparent factor‑of‑two ambiguity in the terminal \(\eta\)-term and makes our peel‑cascade coefficients coincide term‑by‑term with the Wang-Wong output.
\end{remark}

\begin{remark}[Agreement with Wang-Wong \cite{wangwong}]
Write $x=t^{2}+t^{-2}$. The Chebyshev identity
\[
S_j(x)=\frac{t^{2(j+1)}-t^{-2(j+1)}}{t^{2}-t^{-2}}
\]
shows that each peel-cascade coefficient in Proposition~\ref{prop:threaded}
\[
t^{\varepsilon\,(n-2-2j)}\,S_j(x)
\]
coincides with the coefficient produced by the Wang-Wong fast algorithm for the same term, namely
\[
t^{\varepsilon\,(n-2-2j)}\;\frac{t^{2(j+1)}-t^{-2(j+1)}}{t^{2}-t^{-2}}.
\]
Hence the maximal‑thread expansion matches the Wang-Wong output term‑by‑term. See Appendix~\ref{app:ww-example} for a worked example when $(\alpha,\beta)=(2,1),(3,4)$ and several others.
\end{remark}

\section{Maximal Thread Results and Comparison}\label{app:ww-example}

\begin{remark}[Relation to the Wang--Wong recursion]
Proposition~\ref{prop:threaded} may be viewed as a closed-form specialization of the Wang--Wong recursion in the primitive maximal-thread regime. In this regime the recursion has a single active threaded direction, and Lemma~\ref{lem:cascade} solves the resulting inhomogeneous recurrence explicitly. The coefficient dictionary in Remark~\ref{rem:dictionary} identifies the resulting terms with the fast-algorithm coefficients.
\end{remark}

\subsection*{Example: \texorpdfstring{$(4,3)\ast(0,1)$}{(4,3)\ast(0,1)} in the maximal-thread regime}

Let $\alpha=(4,3)$ and $\beta=(0,1)$. Then
\begin{align}
n &= \det\!\begin{pmatrix}4&0\\ 3&1\end{pmatrix} = 4, \\
C_{+} &= \alpha+\beta = (4,4), \quad d_{+} = \gcd(4,4) = 4 = n, \quad \mu_{+} = (1,1), \\
C_{-} &= \alpha-\beta = (4,2), \quad d_{-} = \gcd(4,2) = 2.
\end{align}

Thus the ``$+$'' Frohman--Gelca summand is maximally threaded ($d_{+}=n$), while the ``$-$'' summand has thread degree $2$. The closed-torus part is
\[
t^{4}\,T_{4}(1,1) + t^{-4}\,T_{2}(2,1) = t^{4}\,T(4,4) + t^{-4}\,T(4,2).
\]

By Proposition~\ref{prop:threaded}, the once-punctured torus product adds a peel-cascade from the threaded summand. With $\lfloor(n-2)/2\rfloor=1$ we have two levels $j=0,1$ and $\varepsilon=+1$:
\[
\eta\!\sum_{j=0}^{1} t^{\,4-2-2j}
\Big(T_{\,4-2-2j}(1,1)-\delta_{\,4-2-2j,0}\Big)\,S_j\!\big(t^{2}+t^{-2}\big).
\]

Using $S_{0}=1$ and $S_{1}(x)=x$ with $x=t^{2}+t^{-2}$, this equals
\[
\eta\left[ t^{2}\,T_{2}(1,1) + (T_{0}-1)\,S_1(x)\right].
\]

Altogether,
\begin{align}
(4,3)\ast(0,1) &= t^{-4}\,T(4,2) + t^{4}\,T(4,4) \nonumber \\
&\quad + \eta\left[ t^{2}\,T_{2}(1,1) + (T_{0}-1)\,S_1(x)\right], \quad x=t^{2}+t^{-2}.
\end{align}

Since $T_{0}=2$, the factor $(T_{0}-1)$ equals the unit, so the terminal term is simply $S_1(x)$.

\subsubsection*{Wang--Wong's Fast Algorithm output and simplification}

The Wang--Wong fast algorithm for the same input returns
\begin{align}
&t^{-4}\,T(4,2) + t^{4}\,T(4,4) \nonumber \\
&\quad + \eta\left[\begin{aligned}
&\frac{-t^{4}+t^{-4}}{-t^{2}+t^{-2}}\;T'(0,0) + t^{2}\,T(2,2)
\end{aligned}\right].
\end{align}

Using the paper's $T'$-normalization, $T'(0,0)=1$ (the unit), and
\[
\frac{-t^{4}+t^{-4}}{-t^{2}+t^{-2}} = \frac{t^{4}-t^{-4}}{t^{2}-t^{-2}} = S_1(x), \qquad T(2,2)=T_{2}(1,1),
\]
this simplifies to
\[
t^{-4}\,T(4,2)+t^{4}\,T(4,4) + \eta\left[ t^{2}\,T_{2}(1,1)+S_1(x)\right],
\]
which matches the maximal-thread peel-cascade expression above.

\subsubsection*{Reconciliation with Cho's \texorpdfstring{$|\det|=4$}{|det|=4} calculation}

Cho writes
\[
(4,3)\ast(0,1) = t^{-4}\,T(4,2)+t^{4}\,T(4,4) + \eta\big[t^{2}\,(2,2)_{T}+t^{-2}\,(0,0)_{S}\big].
\]

Changing basis and using $S_1(x)=\dfrac{t^{4}-t^{-4}}{t^{2}-t^{-2}}=t^{2}+t^{-2}$ and $T_{2}(1,1)=(1,1)^{2}-2$, we obtain
\[
\eta\left[t^{2}\,T_{2}(1,1) + S_1(x) \ast 1\right],
\]
which is exactly the same $\eta$-term as in the maximal-thread and Wang--Wong expressions (recall the unit convention $T'(0,0)=1$). Hence all three presentations agree term-by-term.

\subsection*{Example: \texorpdfstring{$(2,1)\ast(3,4)$}{(2,1)\ast(3,4)} in the maximal‑thread regime}
Let $\alpha=(2,1)$ and $\beta=(3,4)$. Then
\[
n=\det\!\begin{pmatrix}2&3\\ 1&4\end{pmatrix}=5,\qquad
C_{+}=\alpha+\beta=(5,5),\ d_{+}=\gcd(5,5)=5,\ \mu_{+}=(1,1),
\]
\[
C_{-}=\alpha-\beta=(-1,-3),\ d_{-}=\gcd(1,3)=1.
\]
Thus the “$+$” Frohman-Gelca summand is maximally threaded ($d_{+}=n$), while the “$-$” summand is simple. The closed‑torus part is
\[
t^{5}\,T_{5}(1,1)\;+\;t^{-5}\,T_{1}(-1,-3)
\;=\;t^{5}\,T_{5}(1,1)\;+\;t^{-5}\,(1,3),
\]
since $T_{1}$ is the identity and $(-1,-3)$ represents the same simple curve as $(1,3)$.

By Proposition~\ref{prop:threaded}, the once‑punctured torus product adds a peel‑cascade from the threaded summand. With $\lfloor(n-2)/2\rfloor=1$ we have two levels $j=0,1$:
\[
\eta\!\sum_{j=0}^{1} t^{\,\varepsilon\,(n-2-2j)}
\Big(T_{n-2-2j}(1,1)-\delta_{n-2-2j,0}\Big)\,S_j(t^{2}+t^{-2}),
\quad \varepsilon=+1.
\]
Using $S_{0}=1$ and $S_{1}(x)=x$, this equals
\[
\eta\Big[ t^{3}\,T_{3}(1,1)\;+\;t^{1}\,T_{1}(1,1)\,(t^{2}+t^{-2})\Big]
=\eta\Big[ t^{3}\,T_{3}(1,1)\;+\;(t^{3}+t^{-1})\,(1,1)\Big].
\]
Altogether,
\[
(2,1)\ast(3,4)
= t^{5}\,T_{5}(1,1)\;+\;t^{-5}\,(1,3)
\;+\;\eta\Big[ t^{3}\,T_{3}(1,1)\;+\;(t^{3}+t^{-1})\,(1,1)\Big].
\]

\subsubsection*{Wang--Wong's Fast Algorithm output and simplification}
The Wang--Wong fast algorithm for the same input returns
\[
t^{5}\,T(5,5)\;+\;t^{-5}\,T(-1,-3)
\;+\;\eta\Big[ t^{3}\,T(3,3)\;+\;t\,\frac{t^{4}-t^{-4}}{t^{2}-t^{-2}}\,T(1,1)\Big].
\]
Identifying $T(5,5)=T_{5}(1,1)$ and $T(3,3)=T_{3}(1,1)$, it remains to simplify the rational coefficient:
\[
t\,\frac{t^{4}-t^{-4}}{t^{2}-t^{-2}}
= t\,\frac{(t^{2})^{2}-(t^{2})^{-2}}{t^{2}-t^{-2}}
= t\,(t^{2}+t^{-2})
= t^{3}+t^{-1}.
\]
This matches the maximal‑thread peel‑cascade term above exactly.

\subsubsection*{Coefficient dictionary: \texorpdfstring{$S_j$}{Sj} vs.\ Wang--Wong}
Throughout the peel cascade, the Chebyshev--$S$ factor admits the standard closed form
\[
S_j\big(t^{2}+t^{-2}\big)
=\frac{t^{2(j+1)}-t^{-2(j+1)}}{t^{2}-t^{-2}},
\]
so a generic peel‑cascade term has coefficient
\[
t^{\varepsilon\,(n-2-2j)}\,S_j\big(t^{2}+t^{-2}\big)
=\;t^{\varepsilon\,(n-2-2j)}\,
\frac{t^{2(j+1)}-t^{-2(j+1)}}{t^{2}-t^{-2}},
\]
which is precisely the coefficient appearing in the Wang--Wong formula.

\subsection*{Example: \texorpdfstring{$(11,67)\ast(3,19)$}{(11,67)\ast(3,19)} in the maximal-thread regime}

Let $\alpha=(11,67)$ and $\beta=(3,19)$. Then
\begin{align}
n &= \det\!\begin{pmatrix}11&3\\ 67&19\end{pmatrix} = 11\ast 19-67\ast 3 = 8, \\
C_{+} &= \alpha+\beta = (14,86), \quad d_{+} = \gcd(14,86) = 2, \quad \mu_{+} = (7,43), \\
C_{-} &= \alpha-\beta = (8,48), \quad d_{-} = \gcd(8,48) = 8 = n, \quad \mu_{-} = (1,6).
\end{align}

Thus the ``$-$'' Frohman--Gelca summand is maximally threaded ($d_{-}=n$), while the ``$+$'' summand has thread degree $2$. The closed-torus part is
\[
t^{8}\,T_{2}(7,43) + t^{-8}\,T_{8}(1,6).
\]

By Proposition~\ref{prop:threaded}, the once-punctured torus product adds a peel-cascade from the maximally threaded summand. With $\lfloor(n-2)/2\rfloor=3$ we have four levels $j=0,1,2,3$:
\[
\eta\!\sum_{j=0}^{3} t^{\,\varepsilon\,(n-2-2j)}
\Big(T_{n-2-2j}(1,6)-\delta_{n-2-2j,0}\Big)\,S_j(t^{2}+t^{-2}),
\]
where $\varepsilon=-1$.

Using $S_{0}=1$, $S_{1}(x)=x$, $S_{2}(x)=x^{2}-1$, $S_{3}(x)=x^{3}-2x$ with $x=t^{2}+t^{-2}$, this equals
\[
\eta\Big[
t^{-6}\,T_{6}(1,6) + t^{-4}\,S_{1}(x)\,T_{4}(1,6)
+ t^{-2}\,S_{2}(x)\,T_{2}(1,6) + S_{3}(x)\,\big(T_{0}-1\big)
\Big],
\]
where $T_{0}=2$. Altogether,
\begin{align}
(11,67)\ast(3,19) &= t^{8}\,T_{2}(7,43) + t^{-8}\,T_{8}(1,6) \\
&+ \eta\left[\begin{aligned}
&t^{-6}\,T_{6}(1,6) + t^{-4}\,S_{1}(x)\,T_{4}(1,6) \\
&+ t^{-2}\,S_{2}(x)\,T_{2}(1,6) + S_{3}(x)\,(T_{0}-1)
\end{aligned}\right],
\end{align}
where $x=t^{2}+t^{-2}.$
\subsubsection*{Wang--Wong's Fast Algorithm output and simplification}

The Wang--Wong fast algorithm for the same input returns
\begin{align}
&t^{8}\,T(14,86) + t^{-8}\,T(8,48) \nonumber \\
&\quad + \eta\left[\begin{aligned}
&\frac{t^{8}-t^{-8}}{t^{2}-t^{-2}}\,T'(0,0) + \frac{t^{6}-t^{-6}}{t^{2}\,\big(t^{2}-t^{-2}\big)}\,T(2,12) \\
&+ \frac{t^{4}-t^{-4}}{t^{4}\,\big(t^{2}-t^{-2}\big)}\,T(4,24) + t^{-6}\,T(6,36)
\end{aligned}\right].
\end{align}
Identifying $T(14,86)=T_{2}(7,43)$ and $T(8,48)=T_{8}(1,6)$, and using
\[
\frac{t^{2m}-t^{-2m}}{t^{2}-t^{-2}}=S_{m-1}(t^{2}+t^{-2}),
\]
the rational coefficients simplify term-by-term to the peel-cascade factors:
\begin{align}
\frac{t^{4}-t^{-4}}{t^{4}(t^{2}-t^{-2})} &= t^{-4}S_{1}(x), \\
\frac{t^{6}-t^{-6}}{t^{2}(t^{2}-t^{-2})} &= t^{-2}S_{2}(x), \\
\frac{t^{8}-t^{-8}}{t^{2}-t^{-2}} &= S_{3}(x).
\end{align}

Moreover $T(6,36)=T_{6}(1,6)$, $T(4,24)=T_{4}(1,6)$, $T(2,12)=T_{2}(1,6)$, and $T(0,0)=T_{0}=2$. The only notational difference is our normalization $(T_{0}-1)$ (Kronecker-delta convention) versus $T'_{0} = 1$ (and $T'_k = T_k$) in Wang--Wong; this adjusts the constant multiple of $S_{3}(x)$ inside the $\eta$-term and is purely conventional. Hence the two expressions agree exactly.


\end{document}